\newtheorem{theorem}{Theorem}[section]
\newtheorem{proposition}[theorem]{Proposition}
\newtheorem{remark}[theorem]{Remark}
\newtheorem{definition}[theorem]{Definition}
\newtheorem{corollary}[theorem]{Corollary}
\newtheorem{Conjecture}[theorem]{Conjecture}
\newtheorem{question}[theorem]{Question}
\newtheorem{example}[theorem]{Example}
\title{Dynamics and Topology of Conformally Anosov Contact 3-Manifolds}
\author{Surena Hozoori}
\begin{document}
\maketitle
\noindent
\begin{abstract}
We provide obstructions to the existence of conformally Anosov Reeb flows on a 3-manifold that partially generalize similar obstructions to Anosov Reeb flows. In particular, we show $\mathbb{S}^3$ does not admit conformally Anosov Reeb flows. We also give a Riemannian geometric condition on a metric compatible with a contact structure implying that a Reeb field is Anosov. From this we can give curvature conditions on a metric compatible with a contact structure that implies universal tightness of the contact structure among other things.
\end{abstract}
\vspace{.5cm}
\section{Introduction}
 Since early 1990s, it is known that Reeb dynamics can be exploited to extract contact topological information regarding the underlying contact 3-manifold. One of the first examples of such relation was discovered in Hofer's proof of Weinstein conjecture for overtwisted contact manifolds \cite{hofermain},
 
 \begin{theorem}[Hofer 1993]
 Any contact 3-manifold $(M,\xi)$ which admits a Reeb vector field without a contractible periodic Reeb orbit is tight.
 \end{theorem}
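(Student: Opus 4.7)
The plan is to argue the contrapositive: assuming $(M,\xi)$ is overtwisted, I will produce, for any contact form $\alpha$ defining $\xi$, a contractible periodic orbit of the Reeb field $R_\alpha$. Fix such an $\alpha$ and an overtwisted disk $D\subset M$, i.e., an embedded disk with Legendrian boundary whose characteristic foliation has a single interior elliptic singularity $p$ and $\partial D$ as a limit cycle.

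First I would pass to the symplectization $(\mathbb{R}\times M, d(e^t\alpha))$ equipped with an $\mathbb{R}$-invariant almost complex structure $J$ that sends $\partial_t$ to $R_\alpha$ and restricts to a $d\alpha$-tamed complex structure on $\xi$. In this setup $\{0\}\times D$ is a totally real submanifold away from the elliptic point. I would then study the moduli space $\mathcal{M}$ of $J$-holomorphic maps $u:(\mathbb{D}^2,\partial\mathbb{D}^2)\to(\mathbb{R}\times M,\{0\}\times D)$ with a prescribed marked boundary point mapping to $p$, and use a local model for $J$ near $\{0\}\times\{p\}$ to produce a nontrivial one-parameter Bishop family of small $J$-holomorphic disks emanating from the elliptic point.

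The central step is to continue this Bishop family inside $\mathcal{M}$. Standard Fredholm theory will show that $\mathcal{M}$ is locally a smooth one-dimensional manifold, which I would parametrize by the $d\alpha$-area of the image in $D$. Since this area is a priori bounded by the total $d\alpha$-area of $D$, the family cannot continue indefinitely. The main obstacle will be to rule out every ``tame'' way in which the family could end and thereby force the appearance of a contractible Reeb orbit. Using Gromov--Hofer compactness, together with exactness of $d(e^t\alpha)$ and the maximum principle in the $t$-direction, I would exclude: (i) escape to the limit cycle $\partial D$, which is forbidden by the Legendrian limit-cycle configuration; (ii) interior sphere bubbles, which do not exist because the symplectization is exact; and (iii) disk bubbles with boundary entirely on $D$, which would leave the elliptic marked point uncaptured by the limit.

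The only remaining possibility, forced by compactness, is the breaking off of a finite-energy plane in $\mathbb{R}\times M$, which by Hofer's foundational theorem on finite-energy planes must be asymptotic at $+\infty$ to a periodic orbit $\gamma$ of $R_\alpha$. Since this plane arises as a bubble off a family of disks, $\gamma$ is contractible in $M$, contradicting the hypothesis. The technical heart of the argument thus lies in verifying the uniform Hofer energy bound along the degenerating sequence and invoking the asymptotic analysis of finite-energy planes to extract the periodic orbit.
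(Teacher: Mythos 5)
Your outline is the standard Bishop-family argument and is essentially Hofer's original proof; the paper itself offers no proof of this statement and simply quotes it from \cite{hofermain}, so there is nothing in the text to diverge from. The one step I would tighten is your item (iii): ruling out disk bubbles with boundary on $D$ rests on Stokes' theorem in the exact symplectization combined with the winding of the disk boundaries around the elliptic point and the a priori $d\alpha$-area bound, not merely on the limit ``failing to capture'' the marked point, and the exclusion of escape to the limit cycle $\partial D$ is precisely where overtwistedness enters through the structure of the characteristic foliation near $\partial D$.
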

 
Similar ideas were used to establish subtle relations between dynamics and topology \cite{hofermain}\cite{hwz}\cite{hwz2}, followed by developments in early 2000s which resulted in the discovery of contact homology, a new Floer theoretic 
invariant of contact manifolds, generated by periodic orbits of Reeb vector fields \cite{sft}.

Therefore, it is natural to ask about the contact topological consequences of putting dynamical restrictions on Reeb vector fields associated to a contact manifold. In particular, we are interested in Reeb vector fields of certain {\em {\ttfamily"}Anosovity class{\ttfamily"}}.

It is well known that admitting an Ansov flow puts rigid restrictions on the topology of the underlying 3-manifold. For instance, it implies that the fundamental group of such a manifold needs to have exponential growth, which implies the non-existence of such flows on many manifolds, including $\mathbb{S}^3$ and $\mathbb{T}^3$ \cite{anosov}\cite{plante}. In the mid 1990s, Mitsumatsu \cite{Mitsumatsu} and Thurston-Eliashberg \cite{confoliations} introduced a more general class of flows which seemed to be more natural from topological and contact geometric viewpoint, namely {\em conformally  Anosov flows} (Mitsumatsu called these projectively Anosov flows). Although it was not clear to what extent this class of flows is bigger than Anosov flows, we know by now that conformally Anosov flows are abundant. In particular, we have infinitely many non-isotopic conformally Anosov flows on both $\mathbb{S}^3$ and $\mathbb{T}^3$ \cite{asa}. It was previously known that regularity assumptions on the stable and unstable directions of conformally Anosov flows makes them almost as rigid as Anosov flows \cite{asa0}\cite{noda}. However, it is hard to control the regularity of those line fields in a general setting.

A particularly interesting question in the context of contact topology is:

\begin{question}
What are the contact topological consequences of a contact manifold admitting an (conformally) Anosov Reeb vector field?
\end{question}

Similar problem can be motivated from the perspective of dynamical system. Recall that for a (conformally) Anosov flow, there exists a unique invariant transverse plane field formed by stable and unstable directions. Then the above question can be formulated as:

\begin{question}
What are the topological consequences of stable and unstable directions of an (conformally) Anosov flow forming a contact structure?
\end{question}

It turns out that admitting an Anosov Reeb vector field forces a lot of contact topological restrictions, like tightness, thanks to the fact that such flows do not contain any contractible periodic orbit. In this paper, we will show that relaxing this condition to admitting a conformally Anosov Reeb vector field does not take away many of the contact topological consequences, even if contractible Reeb orbits exist. This contrasts the phenomena in the general case of comparing Anosov and conformally Anosov flows. The proof relies on computing certain indices, namely \textit{Conley-Zehnder indices}, associated to the closed orbits of Reeb vector fields. Calling such contact structures (conformally) Anosov, our main theorem is

\begin{theorem}
Let $(M,\xi)$ be a conformally Anosov contact 3-manifold. Then $(M,\xi)$ is universally tight, irreducible and it does not admit any exact cobordism to $(\mathbb{S}^3 , \xi_{std})$.
\end{theorem}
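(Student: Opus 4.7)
My strategy is to reduce all three conclusions to a single Conley--Zehnder index estimate: every contractible closed Reeb orbit of a conformally Anosov Reeb vector field on $(M,\xi)$, together with all of its iterates, has Conley--Zehnder index at most $1$ with respect to any capping disk. Once this bound is in place, universal tightness, irreducibility, and the absence of an exact cobordism to $(\mathbb{S}^3, \xi_{std})$ all follow via now-standard pseudo-holomorphic curve arguments.

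To prove the index bound, I would use that the conformally Anosov hypothesis provides a continuous flow-invariant splitting $\xi = E^s \oplus E^u$ with uniform conformal contraction and expansion. Along any closed Reeb orbit $\gamma$ of period $T$, the linearized return map $d\phi^T\colon \xi_{\gamma(0)} \to \xi_{\gamma(0)}$ preserves $E^s$ and $E^u$, so it is hyperbolic with real eigenvalues $e^{\pm \lambda T}$ for some $\lambda > 0$. A symplectic framing of $\xi|_\gamma$ adapted to the splitting (available, after passing to the oriented double cover of $\gamma$ if $E^s$ is non-orientable along $\gamma$) diagonalizes the linearized flow, so the Conley--Zehnder index computed in this intrinsic framing is $0$ for positive hyperbolic orbits and lies in $\{0,\pm 1\}$ otherwise. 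For a contractible orbit $\gamma$ bounding a disk $D$, the Conley--Zehnder index with respect to the disk framing differs from the intrinsic one by twice the relative Euler number of $E^s$ over $D$, and controlling this number using the contact condition (which forces $E^s$ and $E^u$ to be transverse line fields spanning the coorientable plane $\xi$) is where the bulk of the work lies. I expect this step to be the main obstacle.

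Granting the bound, universal tightness follows by lifting the Reeb field to the universal cover of $M$, on which the invariant splitting and pointwise conformal hyperbolicity persist: an overtwisted disk in the cover would, via Hofer's Bishop-family filling argument, produce a contractible closed Reeb orbit of Conley--Zehnder index at least $2$, contradicting the estimate. For the exact cobordism statement, any such cobordism composes with the standard Liouville filling of $(\mathbb{S}^3, \xi_{std})$ to yield an exact filling of $(M, \xi)$; a finite-energy plane in this filling, produced by SFT compactness applied to a neck-stretching degeneration or by Hofer-style plug techniques, must be asymptotic to a contractible Reeb orbit of $M$ whose Fredholm index count forces Conley--Zehnder index at least $2$, again contradicting the estimate. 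Irreducibility is handled by the same mechanism: an essential embedded $2$-sphere in $M$, inserted into a Gromov bubbling argument inside an auxiliary exact filling, once more produces a contractible Reeb orbit of Conley--Zehnder index at least $2$. The principal difficulty of the proof is therefore concentrated entirely in the index estimate; the three topological conclusions are three different incarnations of the same holomorphic curve argument, each engineered to contradict that estimate.
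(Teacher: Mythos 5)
Your overall architecture matches the paper's: reduce everything to a Conley--Zehnder index bound for contractible periodic orbits and then invoke the Hofer and Hofer--Wysocki--Zehnder results, which force such an orbit of index $2$ (or in $\{2,3\}$ for the cobordism case) whenever $\xi$ is overtwisted, $M$ is reducible, or an exact cobordism to $(\mathbb{S}^3,\xi_{std})$ exists. However, there is a genuine gap exactly where you flag it: you never actually control the difference between the splitting-adapted framing and the disk framing, i.e.\ the relative winding of $E^s$ (equivalently $E^u$) over the capping surface. Without that step the index estimate --- and hence all three conclusions --- is unproven. Your proposed route of ``controlling this number using the contact condition'' is not how the difficulty is resolved, and it is not clear it can be: the contact condition constrains how $\xi$ twists infinitesimally along the flow, not how the line field $E^s$ winds over a $2$-chain bounding $\gamma$.

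The paper closes this gap by a different mechanism, in three moves. First, since $\xi$ contains the line subbundle $E^u$, a theorem of Kobayashi gives $2e(\xi)=0\in H^2(M)$, so the Conley--Zehnder index of a nullhomologous orbit is \emph{independent} of the choice of bounding $2$-chain; this removes the need to compute any relative Euler number over a specific disk. Second, for a Seifert surface $\Sigma_1$ of $\gamma$ one passes to the orientation double cover $\pi:\Sigma_2\to\Sigma_1$ of $E^u|_{\Sigma_1}$; its boundary double covers $\gamma$, the lifted $E^u$ is orientable over $\Sigma_2$, and hence the splitting-adapted trivialization of $\xi|_{\gamma^2}$ \emph{is} a $2$-chain-induced trivialization, giving $\mu_{CZ}^{\Sigma_2}(\gamma^2)=0$ directly because the linearized flow preserves the splitting and so never crosses the degeneracy locus. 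Third, by the first move $\mu_{CZ}^{2\Sigma_1}(\gamma^2)=\mu_{CZ}^{\Sigma_2}(\gamma^2)=0$, and the iteration formula for hyperbolic orbits yields $2\mu_{CZ}^{\Sigma_1}(\gamma)=0$, hence $\mu_{CZ}(\gamma)=0$ (strictly stronger than your claimed bound of $\le 1$, though $\le 1$ would indeed suffice for the application). If you want to salvage your outline, this Kobayashi--double-cover--iteration argument is the missing ingredient; note also that the paper simply cites the Hofer/HWZ existence results as a black box rather than rerunning the Bishop-family and neck-stretching arguments you sketch, which is cleaner and avoids introducing new analytic claims.
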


In this paper, we assume $M$ to be a closed, connected and oriented 3-manifold.

\begin{remark}
The same phenomena seem to hold in higher dimensions as well, which is the subject of an upcoming paper. 
\end{remark}

It can be seen that, thanks to the classification of tight contact structures on $\mathbb{S}^3$ \cite{elover}\cite{elmart}, this implies non-existence of such structures on the 3-sphere.

\begin{corollary}\label{corsph}
The 3-sphere does not admit any conformally Anosov contact structures.
\end{corollary}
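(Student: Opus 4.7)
The plan is to derive the corollary by combining the universal tightness and the absence of exact cobordism to $(\mathbb{S}^3, \xi_{std})$ from the main theorem, together with Eliashberg's classification of tight contact structures on $\mathbb{S}^3$. First I would argue by contradiction: suppose some contact structure $\xi$ on $\mathbb{S}^3$ is conformally Anosov. By the main theorem, $(\mathbb{S}^3, \xi)$ is then (universally) tight.

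Next I would invoke Eliashberg's theorem \cite{elover}\cite{elmart}, which states that up to isotopy there is a unique tight contact structure on $\mathbb{S}^3$, namely $\xi_{std}$. Hence $\xi$ must be contactomorphic to $\xi_{std}$.

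Finally, I would observe that any contact manifold $(M,\xi)$ admits a trivial exact cobordism to itself via the symplectization $([0,1] \times M, d(e^t \alpha))$, where $\alpha$ is a contact form for $\xi$. Thus $(\mathbb{S}^3, \xi) \cong (\mathbb{S}^3, \xi_{std})$ admits an exact cobordism to $(\mathbb{S}^3, \xi_{std})$, directly contradicting the third conclusion of the main theorem. This contradiction shows no such $\xi$ can exist.

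The only real obstacle here is a bookkeeping one: making sure that the notion of exact cobordism used in the main theorem is compatible with the trivial symplectization construction (or equivalently, that the main theorem rules out even the identity cobordism), so that the contradiction genuinely closes. Given that the statement forbids \emph{any} exact cobordism to $(\mathbb{S}^3,\xi_{std})$, this should be automatic, and the corollary follows essentially by formal deduction from the main theorem plus the $\mathbb{S}^3$ classification.
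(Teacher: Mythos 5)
Your proposal is correct and follows essentially the same route as the paper: the paper also combines Eliashberg's classification (every contact structure on $\mathbb{S}^3$ is overtwisted or isotopic to $\xi_{std}$) with the observation that $\xi_{std}$ falls under the exact-cobordism case of the main theorem, the overtwisted ones being excluded by tightness. Your explicit identification of the trivial symplectization cobordism as the reason $\xi_{std}$ itself is ruled out is exactly the point the paper leaves implicit.
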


It is also worth mentioning that conformally Anosov contact structures have been previously motivated and studied in the context of the Riemannian geometry of contact manifolds as well. We discuss the two most notable instances in this paper. Firstly, Blair and Perrone \cite{blperr} proved that conformal Anosovity of a contact structure can be concluded from certain curvature conditions on \textit{compatible} Riemannian metrics. We will improve their results by showing that the very same assumptions actually imply Anosovity of such contact structures:

\begin{theorem}\label{negintro}
Let $M^{3}$ be equipped with a contact structure $\xi$ and compatible metric $g$, such that for any unit vector $e\in\xi$:

$$k(e,X_\alpha)<\left [ \frac{\theta'}{2} - \sqrt{\frac{\theta^{' 2}}{4}-\frac{1}{2}Ricci(X_\alpha)} \right ]^2 ;$$
Then $X_\alpha$ is Anosov. Here $k(e,X_\alpha)$ is the sectional curvature of the plane spanned by $e$ and the corresponding Reeb vector field $X_\alpha$, $Ricci(X_\alpha)$ is the Ricci curvature of $X_\alpha$ and $\theta'$ is the instantaneous rotation of $\xi$ with respect to $g$.
\end{theorem}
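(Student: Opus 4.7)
The plan is to construct the stable/unstable splitting and establish exponential rates simultaneously via a Riccati ODE along Reeb orbits; compactness of $M$ will then promote pointwise estimates to uniform Anosov rates.

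\textbf{Step 1 (Riccati equation along a Reeb orbit).} I would fix an orbit $\gamma(t)$ of $X_\alpha$ and an orthonormal frame $\{X_\alpha, e_1, e_2\}$ along $\gamma$ with $e_i \in \xi$. For $v_0 \in \xi_{\gamma(0)}$, track $v(t) := d\phi_t(v_0) \in \xi_{\gamma(t)}$ by writing $v(t) = \lambda(t)\, w(t)$ with $w$ unit. Using compatibility of $g$ with $\xi$, the invariance identity $\mathcal{L}_{X_\alpha} v = 0$, and the second-order Jacobi equation for the linearized flow, I expect the logarithmic growth rate $r(t) := (\log\lambda)'(t)$ to satisfy a Riccati-type ODE of the form
\begin{equation*}
r'(t) + r(t)^2 - \theta'(\gamma(t))\, r(t) + k(w(t), X_\alpha) = 0,
\end{equation*}
where $\theta'$ is identified (as in \cite{blperr}) with the instantaneous rotation of $\xi$.

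\textbf{Step 2 (Quadratic analysis and Ricci identity).} The roots of $x^2 - \theta' x + k = 0$ are $\mu_\pm(w) = \tfrac{\theta'}{2} \pm \sqrt{\tfrac{\theta'^2}{4} - k(w, X_\alpha)}$. The hypothesis bounds $k(w, X_\alpha)$ strictly below the square of the smaller root of $x^2 - \theta' x + \tfrac{1}{2}\mathrm{Ricci}(X_\alpha) = 0$. Combining this with $\mathrm{Ricci}(X_\alpha) = k(e_1, X_\alpha) + k(e_2, X_\alpha)$, a short algebraic check produces a constant $\delta > 0$, independent of the point and of the unit direction $w \in \xi$, so that $\mu_-(w) < -\delta$ and $\mu_+(w) > \delta$ everywhere on $M$.

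\textbf{Step 3 (Invariant splitting and exponential rates).} Standard Riccati comparison shows that, for each orbit, the ODE admits a unique bounded forward solution $r^s$ with $r^s(t) \leq \mu_-(w^s(t))$ and a unique bounded backward solution $r^u$ with $r^u(t) \geq \mu_+(w^u(t))$, corresponding to line fields $E^s, E^u \subset \xi$ determined by the initial directions on which $r$ stays bounded as $t \to +\infty$ or $t \to -\infty$ respectively. These directions are $d\phi_t$-invariant by construction and continuous in the base point by the continuous dependence of extremal Riccati solutions on parameters. Integrating $r^s < -\delta$ and $r^u > \delta$ yields
\begin{equation*}
\|d\phi_t|_{E^s}\| \leq C e^{-\delta t}, \qquad \|d\phi_t|_{E^u}\| \geq C^{-1} e^{\delta t},
\end{equation*}
which is the Anosov property for the splitting $TM = \langle X_\alpha \rangle \oplus E^s \oplus E^u$.

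\textbf{Main obstacle.} The technical heart lies in Step 1: pinning down the correct coefficients of the Riccati equation. Because Reeb orbits are not geodesics in general, the flow Jacobi equation picks up correction terms from $\nabla X_\alpha|_\xi$; the symmetric part of this correction must be shown, via the contact-metric compatibility, to be absorbable into the curvature term, while the antisymmetric part has to be identified exactly with $\theta'$. Once this matching is in place, Steps 2 and 3 reduce to an elementary quadratic analysis and a standard compactness argument.
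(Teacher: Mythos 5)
Your overall strategy (track the logarithmic expansion rate of the linearized flow on $\xi$, show the curvature hypothesis forces a sign, then use compactness of $M$ to get uniform rates) is the same mechanism the paper uses, and Step 3 is essentially the paper's closing argument. But there is a genuine gap exactly where you flag the ``technical heart'': the Riccati equation in Step 1 is not the right one, and the quadratic analysis in Step 2 built on it reaches a false conclusion. Setting $r(t)=g(e(t),\nabla_{e(t)}X_\alpha)$ (which is your $(\log\lambda)'$, since $\tfrac12\tfrac{d}{dt}\ln g(\tilde e,\tilde e)=g(e,\nabla_e X_\alpha)$ for the $\alpha$-Jacobi field $\tilde e$), the curvature identity of Theorem \ref{cur} gives
$$r'+r^2+k(e,X_\alpha)=g(Je,\nabla_e X_\alpha)^2,$$
not $r'+r^2+k=\theta' r$. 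The quantity $b:=g(Je,\nabla_e X_\alpha)$ is a second dynamical unknown; it is tied to $r$ only through the Ricci identity $r^2+\bigl(b-\tfrac{\theta'}{2}\bigr)^2=\tfrac{\theta'^2}{4}-\tfrac12 Ricci(X_\alpha)$, and $b^2\neq \theta' r$ in general. Incidentally, your worry that Reeb orbits fail to be geodesics is moot here: for a compatible metric $X_\alpha$ is a unit geodesic field, which is precisely why these Jacobi-field computations are available.

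The error propagates fatally into Step 2: the roots $\mu_\pm=\tfrac{\theta'}{2}\pm\sqrt{\tfrac{\theta'^2}{4}-k}$ of your quadratic satisfy $\mu_-<0$ only when $k<0$, whereas the hypothesis permits $0<k<\bigl[\tfrac{\theta'}{2}-\sqrt{\tfrac{\theta'^2}{4}-\tfrac12 Ricci(X_\alpha)}\bigr]^2$ whenever $Ricci(X_\alpha)>0$; in that regime both roots are positive and no $\delta$ with $\mu_-<-\delta$ exists. The correct use of the hypothesis is different: on the locus $r=0$ the Ricci identity forces $b^2=\bigl[\tfrac{\theta'}{2}\pm\sqrt{\tfrac{\theta'^2}{4}-\tfrac12 Ricci(X_\alpha)}\bigr]^2$, so the curvature bound gives $r'=b^2-k>0$ there. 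This is not a root-separation statement but a transversality statement: the two directions with $r=0$ sweep strictly from the $r<0$ quadrants into the $r>0$ quadrants, i.e.\ the planes $\langle e_i,X_\alpha\rangle$ are positive and negative contact structures, which yields conformal Anosovity by Proposition \ref{caxi}. Only then do the invariant bundles $E^u,E^s$ sit inside the open quadrants $\{r>0\}$, $\{r<0\}$, and compactness supplies the uniform $\delta$ that your Step 3 needs. So your Steps 2--3 can be salvaged, but only after replacing the scalar Riccati equation by the coupled system $(r,b)$ and rerouting the algebra through the Ricci constraint rather than through the roots of $x^2-\theta'x+k$.
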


To prove the above theorem, we will use a new characterization of certain curvature quantities, derived by the author in \cite{hoz}. Furthermore, we can achieve the topological consequences of the main theorem, given the above curvature conditions. 

\begin{corollary}\label{negintrocor}
Let $(M,\xi)$ be a contact 3-manifold and $g$ a compatible Riemannian structure satisfying the upper bound on its $\alpha$-sectional curvatures given in Theorem \ref{negintro}, in particular if it has negative $\alpha$-sectional curvature. Then $(M,\xi)$ is universally tight, irreducible and does not admit any exact cobordism to $(\mathbb{S}^3,\xi_{std})$.
\end{corollary}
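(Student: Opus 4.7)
The plan is a direct composition of the two substantive results already in the paper. Theorem \ref{negintro} converts the given Riemannian curvature hypothesis into the dynamical statement that the Reeb vector field $X_\alpha$ is Anosov; the main theorem of the introduction then converts conformal Anosovity into the desired contact-topological conclusions. Since an Anosov flow is in particular conformally Anosov---the hyperbolic splitting of the normal bundle is a strengthening of the conformal splitting---chaining these two results produces the corollary.

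Concretely, I would first apply Theorem \ref{negintro} to the compatible pair $(g,\xi)$, concluding that $X_\alpha$ is Anosov, whence $(M,\xi)$ is a conformally Anosov contact 3-manifold in the sense introduced just before the main theorem. The main theorem then yields universal tightness, irreducibility, and the non-existence of an exact cobordism to $(\mathbb{S}^3,\xi_{std})$ in a single stroke.

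For the parenthetical case of strictly negative $\alpha$-sectional curvature, one still needs to verify that the pointwise inequality of Theorem \ref{negintro} is actually implied, since the right-hand side is only meaningful when the expression under the radical is non-negative. In dimension three, $Ricci(X_\alpha)$ decomposes as the sum of the two $\alpha$-sectional curvatures with respect to any orthonormal frame of $\xi$, so negativity of all such sectional curvatures forces $Ricci(X_\alpha)<0$. This makes the quantity $\tfrac{\theta^{\prime 2}}{4}-\tfrac{1}{2}Ricci(X_\alpha)$ strictly greater than $\tfrac{\theta^{\prime 2}}{4}$, so $\tfrac{\theta'}{2}-\sqrt{\tfrac{\theta^{\prime 2}}{4}-\tfrac{1}{2}Ricci(X_\alpha)}$ is a nonzero real number whose square is strictly positive, and any negative $k(e,X_\alpha)$ trivially undercuts it. The genuine work has been done in Theorem \ref{negintro} and the main theorem, so there is no serious obstacle beyond this short Ricci computation.
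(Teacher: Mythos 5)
Your proposal is correct and follows exactly the route the paper takes: apply Theorem \ref{negintro} to get Anosovity (hence conformal Anosovity) of $X_\alpha$, then invoke the main theorem. Your extra check that negative $\alpha$-sectional curvature implies the stated inequality (via $Ricci(X_\alpha)=k(e,X_\alpha)+k(Je,X_\alpha)<0$) is a sound clarification of the ``in particular'' clause, and is consistent with Remark \ref{nav}, which guarantees the radicand is always non-negative for compatible metrics.
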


Moreover, Perrone \cite{perr} showed that conformal Anosovity of contact structures can be concluded assuming the existence of a compatible Riemannian metric which is the critical point of \textit{Chern-Hamilton energy functional }
$$E(g):= \int_M  |\mathcal{L}_{X_\alpha} g|^2 \hspace{0.3cm}dVol(g)$$
and is \textit{nowhere Reeb-invariant}. i.e. we have $\mathcal{L}_{X_\alpha} g \neq0$ everywhere. Here, $\alpha$ is the contact form corresponding to $g$ and $X_\alpha$ is the associated Reeb field.The significance of the study of such critical metrics is because of \textit{Chern-Hamilton conjecture} \cite{chernh} that can be generalized to:

\begin{Conjecture}
For any closed contact 3-manifold $(M,\xi)$, there exists a compatible metric that realizes the minimum (among compatible metrics) of the Chern-Hamilton energy functional.
\end{Conjecture}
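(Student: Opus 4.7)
The plan is to apply the direct method of the calculus of variations on the space $\mathcal{M}(\xi)$ of metrics compatible with $\xi$. First I would give $\mathcal{M}(\xi)$ the structure of an infinite-dimensional Fr\'echet manifold via the parameterization by pairs $(\alpha,J)$, where $\alpha$ is a contact form defining $\xi$ and $J$ is a $d\alpha$-compatible almost complex structure on the hyperplane bundle. Since $E$ is non-negative and smooth in this parameterization, the infimum $E_0 := \inf_{\mathcal{M}(\xi)} E \ge 0$ exists and one can fix a minimizing sequence $g_n$ with $E(g_n) \searrow E_0$.

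Next I would seek a subsequential limit. The bound on $E(g_n)$ supplies a uniform $L^2$-bound on $\mathcal{L}_{X_{\alpha_n}} g_n$; after normalizing the contact volume $\int_M \alpha_n\wedge d\alpha_n = 1$ (which is harmless, since the behavior of $E$ under constant rescaling of $\alpha$ is controlled), I would combine this $L^2$-bound with the algebraic compatibility constraints relating $g_n$, $\alpha_n$, and $J_n$ to obtain uniform Sobolev bounds on the coefficients of $g_n$. Passing to a weak limit $g_\infty$ and checking that $g_\infty\in\mathcal{M}(\xi)$, lower semicontinuity of $E$ with respect to the weak topology would then yield $E(g_\infty)\le E_0$, forcing equality.

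The crux of the plan, and the reason the Chern--Hamilton conjecture remains open, is this compactness step. The functional $E$ is invariant under the full group of contactomorphisms of $(M,\xi)$, which is infinite-dimensional and non-compact, so minimizing sequences can drift indefinitely along these orbits; moreover $\alpha_n$ may pointwise degenerate along the sequence, in which case the Reeb vector field $X_{\alpha_n}$, which depends nonlinearly on $\alpha_n$, may blow up and prevent passage to the limit inside the integrand. The natural attack is a slice theorem for the contactomorphism action in the spirit of Ebin's slice for the diffeomorphism group on Riemannian metrics, combined with a Moser-type argument to control the conformal factor of $\alpha_n$. As a complementary strategy, I would study the negative $L^2$-gradient flow of $E$, establish short-time existence modulo a gauge-fixing for the contactomorphism symmetry, and attempt to use monotonicity of $E$ along the flow to extract a long-time subconvergent sequence converging to a critical point of $E$; this would convert the variational compactness problem into a parabolic one, where the gauge degeneracies may be easier to tame. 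Testing both strategies on the K-contact locus, where Chern and Hamilton already produced minimizers, should provide a crucial sanity check before tackling the general case.
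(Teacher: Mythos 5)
The statement you are trying to prove is stated in the paper as a \emph{conjecture}, not a theorem: the paper offers no proof of it, and in fact the only results it records in this direction are Rukimbira's verification for the very special class of generalized Boothby--Wang fibrations (precisely those admitting a compatible metric with $\mathcal{L}_{X_\alpha}g=0$, for which the functional attains its absolute minimum value $0$) and Perrone's theorem that a nowhere Reeb-invariant critical metric forces conformal Anosovity. The paper's own contribution is an \emph{obstruction}: Corollary~\ref{critintro} shows that on a large class of contact manifolds any critical compatible metric must have $\mathcal{L}_{X_\alpha}g=0$ somewhere. So there is no ``paper's proof'' to compare against, and your proposal should be judged as an attempt at an open problem.

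As such, your writeup is a reasonable sketch of the standard direct-method attack, but it contains a genuine and decisive gap, which you yourself flag: the compactness step is asserted, not established. Concretely, the claim that the uniform $L^2$-bound on $\mathcal{L}_{X_{\alpha_n}}g_n$ ``combined with the algebraic compatibility constraints'' yields uniform Sobolev bounds on $g_n$ does not hold. The quantity $\mathcal{L}_{X_{\alpha_n}}g_n$ controls only the torsion, a single tensorial derivative of $g_n$ in the Reeb direction; it gives no coercivity on the full metric, and since $E$ is invariant under the infinite-dimensional, non-compact contactomorphism group, minimizing sequences can escape to infinity in every Sobolev topology without any change in energy. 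Moreover the class of compatible metrics is not weakly closed: the defining identity $g(u,v)=\frac{1}{\theta'}d\alpha(u,Jv)+\alpha(u)\alpha(v)$ together with $J^2=-Id$ and the contact condition $\alpha\wedge d\alpha\neq 0$ can all degenerate under weak limits, and $X_{\alpha_n}$ depends on first derivatives of $\alpha_n$ through $d\alpha_n$, so weak lower semicontinuity of $E$ is also unproven. The slice-theorem and gradient-flow strategies you mention are sensible research directions, but until one of them is carried out this is a program, not a proof, and the conjecture remains open.
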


The corollary of our main result is that such critical metrics on a large class of contact 3-manifolds cannot be nowhere Reeb-invariant.

\begin{corollary}\label{critintro}
Let $(M^3,\xi)$  be a contact 3-manifold which is either overtwisted, reducible or admits an exact cobordism to $(\mathbb{S}^3,\xi_{std})$ and $g$ a critical compatible metric. Then, there exists some point at which
$$\mathcal{L}_{X_\alpha}g=0,$$
where $\alpha$ is the contact form corresponding to $g$ and $X_\alpha$ is the associated Reeb field.
\end{corollary}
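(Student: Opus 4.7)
The plan is to deduce Corollary \ref{critintro} as a direct contrapositive application of the main Theorem, combined with the result of Perrone cited in the paragraph just above the statement. Perrone's theorem provides the implication: if $g$ is a compatible critical metric for the Chern-Hamilton energy functional $E$, and if $g$ is nowhere Reeb-invariant, i.e.\ $\mathcal{L}_{X_\alpha}g\neq 0$ at every point of $M$, then the ambient contact structure $\xi$ is conformally Anosov. Our main theorem, on the other hand, provides a list of contact-topological restrictions forced by conformal Anosovity, namely universal tightness, irreducibility, and non-existence of exact cobordisms to $(\mathbb{S}^3,\xi_{std})$.

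Given this setup, the proof I would write is essentially one line of logic. Suppose, toward a contradiction, that $(M,\xi)$ belongs to one of the three classes listed in the corollary (overtwisted, reducible, or admitting an exact cobordism to $(\mathbb{S}^3,\xi_{std})$), that $g$ is a critical compatible metric, and that $\mathcal{L}_{X_\alpha}g\neq 0$ everywhere on $M$. By Perrone's criterion, $\xi$ must then be conformally Anosov. But the main theorem asserts that every conformally Anosov contact 3-manifold is universally tight (hence not overtwisted), irreducible, and admits no exact cobordism to $(\mathbb{S}^3,\xi_{std})$, directly contradicting the assumed membership in any of the three classes. Hence the assumption $\mathcal{L}_{X_\alpha}g\neq 0$ on all of $M$ is untenable, and there must exist some point at which $\mathcal{L}_{X_\alpha}g=0$.

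The heavy lifting in this argument is entirely outsourced: Perrone's side comes from \cite{perr}, and the topological obstructions come from the main theorem of this paper. So there is no genuine obstacle in this corollary beyond a careful statement of the contrapositive. The only thing I would double-check while writing is that each of the three hypotheses in the corollary truly negates one of the three conclusions of the main theorem, namely that ``overtwisted'' negates universal tightness (this is standard), ``reducible'' negates irreducibility (by definition), and the cobordism hypothesis negates the cobordism conclusion verbatim. Once these correspondences are recorded, the proof is a single paragraph invoking Perrone's theorem and the main theorem of the paper.
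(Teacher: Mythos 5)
Your proposal is correct and matches the paper's own (very brief) argument: the paper derives Corollary~\ref{critintro} exactly by combining Perrone's criterion (Theorem~\ref{crit}) with the main theorem, which is the contrapositive you spell out. No gaps; your version is just a more explicit writeup of the same logic.
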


It is well-known (see Section~\ref{3}) that invariance of a compatible metric under the Reeb flow is equivalent to the invariance of the corresponding complex structure. The above corollary, as well as our study of Ricci curvature of compatible metrics in \cite{hoz}, leads us to the following conjecture:

\begin{Conjecture}
Let $(M^3,\xi)$ be a closed contact manifold admitting a contact form $\alpha$ and a complex structure $J$ on $\xi$ such that we have
$$\mathcal{L}_{X_\alpha} J\neq 0$$
everywhere. Then $(M,\xi)$ is universally tight.
\end{Conjecture}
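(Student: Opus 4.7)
The natural strategy is to reduce to the main theorem: showing that the hypothesis $\mathcal{L}_{X_\alpha} J \neq 0$ everywhere implies $(M,\xi)$ is conformally Anosov would yield universal tightness immediately. Since the hypothesis is precisely Perrone's with the Chern--Hamilton criticality condition dropped, the plan is to upgrade any nowhere Reeb-invariant compatible pair $(\alpha, J)$ to one that is also a critical point of the Chern--Hamilton energy $E$, and then invoke Perrone's result together with our main theorem.

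Concretely, let $\mathcal{M}(\xi)$ denote the space of compatible pairs $(\alpha, J)$ and let $\mathcal{U}(\xi) \subset \mathcal{M}(\xi)$ be the open subset consisting of pairs with $\mathcal{L}_{X_\alpha} J \neq 0$ everywhere. By hypothesis, $\mathcal{U}(\xi) \neq \emptyset$. I would attempt to run a minimizing sequence or downward gradient flow for $E$ starting inside $\mathcal{U}(\xi)$ and argue either that (i) the infimum of $E$ restricted to $\mathcal{U}(\xi)$ is attained at a critical pair, to which Perrone's theorem applies, or (ii) any escape from $\mathcal{U}(\xi)$ must pass through a pair for which the Reeb flow of $\xi$ is already conformally Anosov via a different mechanism. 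The key analytic inputs would be the variational formula for $E$ expressing $\nabla E$ in terms of $\mathcal{L}_{X_\alpha} J$ (so that critical points lie in the closure of $\mathcal{U}(\xi)$), together with a Chern--Hamilton-type compactness statement.

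A parallel, more direct route bypasses Chern--Hamilton entirely and attacks tightness via Conley--Zehnder computations, in the spirit of the main theorem. Given a contractible Reeb orbit $\gamma$, view the linearized flow on $\xi$ along $\gamma$ as a path in the symplectic Lie algebra of $(\xi, d\alpha|_\xi)$; the non-commutation of its generator with $J$ everywhere says that the symmetric-traceless part of the generator (with respect to the frame adapted to $J$) is pointwise nonzero. I would try to use this pointwise hyperbolicity to constrain the Conley--Zehnder index of $\gamma$, or the asymptotic behavior of a pseudoholomorphic plane filling it, and then extract a contradiction in the spirit of Hofer's argument in the overtwisted case.

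The substantive difficulty in both routes is the passage from pointwise to integrated data. A path in the symplectic Lie algebra whose generator is pointwise non-conformal can perfectly well integrate to an elliptic, hence conformal, holonomy, by rotating among non-conformal directions in such a way that the distortions cancel out in average. Ruling out this cancellation, for instance via a uniform lower bound on some averaged distortion functional along every contractible Reeb orbit, is where I expect the hard analysis to concentrate. Ergodic tools such as the Oseledec and sub-additive ergodic theorems are natural candidates, but they do not obviously apply without first producing some invariant coarse splitting of $\xi$, which is precisely what the conclusion of conformal Anosovity would provide.
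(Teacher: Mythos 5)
This statement is posed in the paper as an open conjecture; the paper supplies no proof, only the heuristic that Corollary~\ref{critintro} and the curvature analysis of \cite{hoz} make it plausible. Your proposal is likewise not a proof: both routes you sketch terminate at gaps that you candidly flag but do not close, and those gaps are exactly where the conjecture lives. The hypothesis $\mathcal{L}_{X_\alpha}J\neq 0$ everywhere yields only the \emph{pointwise} splitting of $\xi$ into four quadrants described in Remark~\ref{nav}; conformal Anosovity requires a flow-\emph{invariant} dominated splitting, and no mechanism in your proposal produces the invariance or the domination.

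Concretely: route (i) cannot work as stated. It presupposes existence of a critical compatible metric, which is the (open) generalized Chern--Hamilton conjecture, and it additionally needs the critical pair to remain in your open set $\mathcal{U}(\xi)$. But Corollary~\ref{critintro} says precisely that on an overtwisted manifold every critical compatible metric has a point where $\mathcal{L}_{X_\alpha}g=0$; so if $(M,\xi)$ were overtwisted and carried a nowhere Reeb-invariant (non-critical) pair --- the situation the conjecture must rule out --- your minimizing sequence would necessarily exit $\mathcal{U}(\xi)$, Perrone's Theorem~\ref{crit} would not apply, and no contradiction would be reached. The argument is consistent with both truth values of the conjecture. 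Route (ii) founders on the cancellation problem you yourself identify: a path in $Sp(1_{\mathbb{C}})$ whose generator has everywhere nonzero traceless part can still integrate to a positively hyperbolic return map reached by a path of Conley--Zehnder index $2$, which is exactly the index Theorem~\ref{hofermain} produces on overtwisted manifolds; without an invariant splitting along the orbit (which is what the proof of Theorem~\ref{main}(3) uses to pin the index at $0$), no constraint on $\mu_{CZ}(\gamma)$ follows. The missing idea --- promoting pointwise non-invariance of $J$ to integrated dynamical information along every contractible orbit --- is the substance of the conjecture, and neither route supplies it.
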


In Section~\ref{2} of this paper, we briefly review the necessary backgrounds for understanding the main theorem and its proof, which includes topics from contact topology in dimension 3, Reeb dynamics, and Anosovity of vector fields. In Section~\ref{3}, although it is not necessary to understand the proof of the main theorem, we discuss motivations of our study coming from Riemannian geometry of contact manifolds and will improve a theorem of Blair-Perrone \cite{blperr}. Finally in Section~\ref{4}, we give the proof of the main theorem, as well as some of its consequences, including for the Riemannian geometric motivations.

\vspace{0.5cm}

\textbf{ACKNOWLEDGEMENT:} I greatly thank my advisor John Etnyre for his steady support, encouragement and availability to help. I am also grateful to Igor Belegradek, Sudipta Kolay, Hyun Ki Min and Andrew McCullough for helpful discussions along the way. The author was partially supported by the NSF grant DMS-1608684.

\section{Background}\label{2}

In this section, we review the necessary background for our main theorem. For more details, the interested reader is encouraged to refer to more comprehensive sources: for an introduction to contact topology in dimension 3, \cite{etnyreintro} and \cite{geiges} provide the necessary background, for the study of Anosovity in the context of contact geometry, see \cite{Mitsumatsu}, \cite{confoliations} and \cite{foulonh}, and for understanding Conley-Zehnder indices and their role in contact topology and dynamics, we refer the reader to the beautiful concise survey of \cite{hoferdy}. 

The study of Conley-Zehnder indices goes beyond contact topology. A curious reader will find \cite{Long} and \cite{gutt} insightful on this topic.

\subsection{Contact Structures in Dimension 3}

\begin{definition}
We call the 1-form $\alpha$ a contact form on $M$, if
$$\alpha \wedge d\alpha \neq 0.$$
If $\alpha \wedge d\alpha>0$ (compared to the orientation on $M$), we refer to $\alpha$ as a positive contact form and otherwise, a negative one. We call $\xi:=\ker{\alpha}$ a (positive or negative) contact structure on $M$. Moreover, we call the pair $(M,\xi)$ a contact manifold.
\end{definition}

Note that by Frobenius theorem, we can equivalently define $\xi$ as a \textit{coorientable maximally non-integrable} plane field on $M$. 

In this paper, we assume the contact structures to be positive, unless stated explicitly otherwise.

\begin{example}
Basic examples of contact structures are:

1) $\alpha_{std}=dz-ydx$ is a contact form on $\mathbb{R}^3$. We call $\xi_{std}$ the standard contact structure on $\mathbb{R}^3$.

2) Consider $\mathbb{S}^3$ as the unit sphere in $\mathbb{C}^2$. Then we can define the standard contact structure on $\mathbb{S}^3$ by $\xi_{std}:=T\mathbb{S}^3 \cap J T\mathbb{S}^3$, where $J$ is the standard complex structure on $T\mathbb{C}^2$. i.e. $\xi_{std}$ is the unique complex line tangent to the unit sphere. Moreover, we can easily show that this is the one point compactification of the standard contact structure on $\mathbb{R}^3$.

3) Let $(\Sigma,g)$ be a Riemannian surface and $\pi:UT\Sigma \rightarrow \Sigma$ its unit tangent bundle. The \textit{tautological 1-form} given by
$$\theta_u (v):=g(u,\pi_* v ) \text{  for } u \in UT\Sigma \text{ and } v \in T_u UT\Sigma$$

is a contact form on $UT\Sigma$.
\end{example}

 Darboux theorem states that all contact structures are locally \textit{contactomorphic}. i.e. locally there exists a diffeomorphism taking one contact structure to any other. In other words, contact structures do not have local invariants, in contrast to Riemannian geometry where curvature distinguishes Riemannian manifolds locally. This means that if contact geometry carries any information about the underlying manifold, it should be of global nature. Since mid 1970s, starting with the works of Bennequin, subtle relation between contact structures and 3-manifold topology has been explored and full-blown into one of the most active area of low dimensional topology. It turns out that the following dichotomy, introduced by Eliashberg \cite{elover}, plays a significant role in such study:
 
 \begin{definition}
 We call $(M,\xi)$ overtwisted if there exists an embedded disk in $M$ that is tangent to $\xi$ along its boundary. Otherwise, we call $(M,\xi)$ tight. Moreover, $\xi$ universally tight if even its lift to the universal cover of $M$ is tight.
 \end{definition}
 
 Although it is not clear from the above definition why this is a useful dichotomy, Eliashberg showed that we can reduce the study of overtwisted contact structures to the algebraic topology of the underlying manifold and classify them (in particular they always exist and are abundant). On the other hand, tight contact structures are harder to understand, classify, and they do not always exist, but when they do, are closely connected to the smooth topology of the maifold. One of the main problems in contact topology is to determine when a given contact structure is tight as well as classify such structures on a given 3-manifolds.
 
\begin{remark}
It can be shown that all the contact structures given in above examples are tight. Let us also mention that, according to to the classification of contact structures on $\mathbb{S}^3$ by Eliashberg \cite{elover}\cite{elmart}, $\mathbb{S}^3$ admits a $\mathbb{Z}$-family of distinct (up to isotopy) contact structures and $\xi_{std}$ is the only tight one.
\end{remark}

\subsection{Contact Dynamics and Conley-Zehnder Indices}

One way to study the topological properties of contact manifolds is through the dynamics of such objects. More precisely, through the dynamics of the associated \textit{Reeb vector fields}.

\begin{definition}
Let $(M,\xi)$ be a contact 3-manifold. Any choice of contact form $\alpha$ for $\xi$ defines a unique vector field $X_\alpha$ satisfying:

\center{i) $d\alpha(X_\alpha ,.)=0$}

\center{ii) $\alpha(X_\alpha)=1$ }
\end{definition}
It is easy to check:
\begin{proposition}
The Reeb vector field $X_\alpha$ satisfies:

a) $X_\alpha \pitchfork \xi$

b) $\mathcal{L}_{X_\alpha} \alpha =0$
\end{proposition}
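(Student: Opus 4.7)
The plan is to derive both claims directly from the two defining conditions (i) and (ii) for $X_\alpha$, without invoking anything beyond Cartan's magic formula.

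For part (a), I would note that a smooth vector field is transverse to the hyperplane distribution $\xi=\ker\alpha$ at a point precisely when $\alpha$ does not vanish on it there. The normalization $\alpha(X_\alpha)=1$ supplied by condition (ii) rules this out everywhere, so $X_\alpha\pitchfork\xi$ is immediate, and at every point we get the splitting $TM = \xi \oplus \mathbb{R}\cdot X_\alpha$.

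For part (b), the main step is an application of Cartan's magic formula,
$$\mathcal{L}_{X_\alpha}\alpha = d(\iota_{X_\alpha}\alpha) + \iota_{X_\alpha}(d\alpha).$$
The first summand equals $d(\alpha(X_\alpha)) = d(1) = 0$ by condition (ii), while the second vanishes outright by condition (i). Adding these yields $\mathcal{L}_{X_\alpha}\alpha = 0$.

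Because the argument reduces to unwinding definitions plus one invocation of Cartan's formula, there is no genuine obstacle to address; the role of the proposition is simply to record two basic features of Reeb flows that will be used tacitly throughout the rest of the paper. The only place one could slip is in sign or convention issues when writing out Cartan's formula, but these are pinned down by the definitions above.
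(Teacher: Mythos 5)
Your proof is correct and is exactly the standard argument the paper has in mind when it says ``It is easy to check'': transversality from $\alpha(X_\alpha)=1\neq 0$, and $\mathcal{L}_{X_\alpha}\alpha=0$ from Cartan's formula together with the two defining conditions. Nothing further is needed.
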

\begin{example}
the Reeb vector field for the contact structures given above are:

1) $\partial_z$ is the Reeb vector field for $(\mathbb{R}^3,\alpha_{std})$.

2) The Reeb vector field associated to $(\mathbb{S}^3,\xi_{std})$ traces the Hopf fibration, considering an appropriate contact form for $\xi_{std}$.

3) Geodesic flow on $UT\Sigma$ is the Reeb vector field associated to $(UT\Sigma, \theta)$. It can be useful to think of Reeb vector fields as generalization of geodesic flows.
\end{example}

It is well known that these vector fields play a significant role the theory of contact geometry, comparable to the role of Hamiltonian vector fields in symplectic geometry. As a matter of fact, Reeb vector fields are Hamiltonian vector fields on the so called {\em symplectization} of a contact manifold.

However, it was not till early 1990s that dynamics of Reeb vector fields were used to study the topology of contact manifolds, thanks to many, but first and foremost, Hofer, Wysocki, and Zehnder. In order to state a particular result that we will use in the proof of the main theorem, we need to introduce an index associated to the closed orbits of Reeb vector fields.

Recall that the group of symplectic linear maps reduces to the group of area preserving linear maps in dimension 2. i.e.

$$Sp(1_{\mathbb{C}}):=\{ A \in \mathbb{R}^{2 \times 2} | A^T JA=Id\}=SL(2;\mathbb{R})$$

We can uniquely write any $A\in Sp(1_{\mathbb{C}})$ as $A=MU$, where $U\in SO(2)$ and $M$ is a symmetric positive definite matrix. Since the space of positive definite matrices is contractible, we conclude that $Sp(1_{\mathbb{C}})$ is homotopy equivalent to $SO(2)$ and therefore $\pi_1 (Sp(1_{\mathbb{C}}))=\mathbb{Z}$.

Now considering a path of symplectic maps, starting from $Id$, we want to measure its rotation around the generator of $\pi_1 (Sp(1_{\mathbb{C}}))$ (notice that we are not assuming that such path is closed). It is not necessary, but for the sake of simplicity, we restrict our definition to paths ending in the subspace of $Sp(1_{\mathbb{C}})$ given by

$$Sp^* (1_{\mathbb{C}}):=\{A \in Sp(1_{\mathbb{C}}) | det(A-Id) \neq 0 \}.$$

We call such paths of symplectic maps, \textit{non-degenerate} and denote the space of such paths by $\Sigma^*(1_\mathbb{C})$.

For non-degenerate paths of symplectic maps

$$\Phi: [0,T] \rightarrow Sp(1_{\mathbb{C}}) \text{ with }\Phi(0)=Id \text{ and } \Phi(T)\in Sp^*(1_{\mathbb{C}})$$

\noindent we can define a unique index map, defined by the following axioms, see \cite{hoferdy}:

\begin{theorem}
There exists a unique map, called \textit{Conley-Zehnder index},

$$\mu_{CZ} :\Sigma^* (1_\mathbb{C}) \rightarrow \mathbb{Z}$$
\noindent such that

1) \underline{Homotopy Invariance}: $\mu_{CZ}$ is invariant under homotopy through non-degenerate paths.

2) \underline{Maslov Compatibility}: Let $L:[0,T] \rightarrow Sp (1_{\mathbb{C}})$ be a continuous closed loop, then
$$\mu_{CZ}(L\Phi)-\mu_{CZ}(\Phi)=2 \mathcal{\mu}(L);$$
\noindent where $\mathcal{\mu}(L)$ is the \textit{Maslov index} of $L$.

3) \underline{Invertibility}:$$\mu(\Phi^{-1}) =-\mu(\Phi)$$

4) \underline{Normalization}: $$\mu_{CZ}(e^{i\pi t}|_{t\in [0,1]})=1$$

\end{theorem}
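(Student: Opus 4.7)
The plan is to construct $\mu_{CZ}$ explicitly from the universal cover of $Sp(1_\mathbb{C})$ and then show that the four axioms force any candidate map to coincide with it. The key ingredient is already recorded in the text: the polar decomposition exhibits $Sp(1_\mathbb{C})$ as homotopy equivalent to $SO(2) \cong S^1$, with universal cover $\mathbb{R}$, so every path $\Phi : [0,T] \to Sp(1_\mathbb{C})$ starting at $Id$ admits a canonical continuous lift $\widetilde{\Phi} : [0,T] \to \mathbb{R}$ with $\widetilde{\Phi}(0) = 0$, recording an accumulated rotation angle.

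For existence, I would first stratify $Sp^*(1_\mathbb{C})$ by spectral type. An element of $SL(2;\mathbb{R})$ fails the condition $\det(A - Id)\neq 0$ precisely when $1$ is an eigenvalue, and the complement decomposes into elliptic, positive hyperbolic, and negative hyperbolic loci. Pulled back to the universal cover, these strata occupy disjoint open intervals separated by the loci where $1$ or $-1$ is an eigenvalue. I would then define $\mu_{CZ}(\Phi)$ as the unique odd integer lying in the slot containing $\widetilde{\Phi}(T)$ when $\Phi(T)$ is elliptic or negative hyperbolic, and the unique even integer in the corresponding positive hyperbolic slot. Homotopy invariance is then automatic, since the lift depends continuously on $\Phi$ and the assigned integer is locally constant on the open strata. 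Maslov compatibility follows because a closed loop $L$ shifts the lift by $\mu(L)$ full turns, each of which traverses exactly two slots of each spectral type, contributing $2\mu(L)$. Invertibility reflects the symmetry $\widetilde{\Phi^{-1}}(t) = -\widetilde{\Phi}(t)$, and the normalization axiom is a direct check: $e^{i\pi t}|_{[0,1]}$ ends after half a turn in the first elliptic (or boundary) slot, whose assigned integer is $1$.

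For uniqueness, suppose $\mu'$ also satisfies the four axioms. Homotopy invariance reduces the computation of $\mu'(\Phi)$ to its value on one representative per homotopy class of non-degenerate paths, and these classes are indexed exactly by the slots above. Two representatives in the same slot differ by a loop, and Maslov compatibility identifies their $\mu'$-values up to $2\mu(L)$; hence $\mu'$ is determined by its value on a single reference path for each slot. Normalization pins down the slot containing $-Id$, Maslov compatibility propagates the determination to every other elliptic slot via concatenation with rotation loops, and since the hyperbolic slots are adjacent to elliptic slots across a single eigenvalue collision, the corresponding jumps in $\mu'$ are forced by the constraints already verified, yielding $\mu' = \mu_{CZ}$.

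The step I expect to require the most care is the verification of Maslov compatibility, since it demands matching the additive structure on the universal cover with the combinatorics of how a full rotation traverses the stratification of $Sp^*(1_\mathbb{C})$; getting the signs and parities consistent across all three spectral types is the only genuinely delicate bookkeeping. An alternative route that sidesteps much of this is the Robbin--Salamon crossing-form definition, in which $\mu_{CZ}$ is a signed count of times $t$ where $\det(\Phi(t) - Id) = 0$, weighted by the signature of an associated quadratic form; in that framework all four axioms are essentially immediate, and uniqueness again reduces to checking agreement on a generating set of model paths.
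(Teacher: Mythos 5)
First, note that the paper itself does not prove this theorem: it is stated as background and deferred entirely to the cited survey \cite{hoferdy}, so there is no in-paper argument to compare against. Judged on its own terms, your proposal has a genuine gap at its central step. You define $\mu_{CZ}(\Phi)$ by locating $\widetilde{\Phi}(T)$ in a system of ``slots'' in the universal cover $\mathbb{R}$, claiming that the elliptic, positively hyperbolic and negatively hyperbolic strata of $Sp^*(1_\mathbb{C})$ ``occupy disjoint open intervals'' there. That is false: the only canonical lift available from the polar decomposition $A=MU$ is the angle of the $SO(2)$-factor $U$, and this angle does not determine the spectral type of $\Phi(T)$. For instance $\mathrm{diag}(2,1/2)$ (positively hyperbolic), $Id$ (degenerate), and nearby elliptic matrices all have polar angle $0$, so the three strata are interleaved over every angle and ``the slot containing $\widetilde{\Phi}(T)$'' is not well defined. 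The standard repair --- which is what the sources actually do --- is to use that $Sp^*(1_\mathbb{C})$ has exactly two connected components $Sp^{\pm}=\{A : \pm\det(A-Id)>0\}$ (equivalently $\mathrm{tr}(A)\lessgtr 2$), that every loop contained in either component is contractible in $Sp(1_\mathbb{C})$, and then to extend $\Phi$ within its endpoint component to a normal form ($-Id$ or $\mathrm{diag}(2,1/2)$) before reading off a winding number; the index is a function of the pair (endpoint component, winding of the extended path), not of an interval containing a lifted angle.

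Second, your uniqueness argument underuses the axioms precisely where it matters. Homotopy invariance, Maslov compatibility and normalization do pin down $\mu'$ on every class of paths ending in $Sp^{+}$, but they say nothing about the positively hyperbolic classes: $\mu'$ is only defined on non-degenerate paths, there is no continuity axiom across the locus $\det(A-Id)=0$, and so ``adjacency across a single eigenvalue collision'' forces nothing. The missing step is invertibility: the model path $\Phi_0(t)=\mathrm{diag}(e^{t},e^{-t})$ is homotopic through non-degenerate paths to its pointwise inverse (conjugate by a rotation through $\pi/2$), so axioms (1) and (3) give $\mu'(\Phi_0)=-\mu'(\Phi_0)=0$, and only then does Maslov compatibility propagate the value to all classes ending in $Sp^{-}$. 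Your closing appeal to the Robbin--Salamon crossing form is a legitimate alternative construction, but as written it is a pointer to a different definition rather than a verification of existence and uniqueness for this one.
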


\begin{remark}\label{czbasic}
1) Notice that Maslov index assigns to any closed path in $\Sigma^*(1_\mathbb{C})$, the degree of such map. Therefore Axiom 2) means that for any full round around the generator of $\pi_1 (Sp(1_{\mathbb{C}}))$, we are adding $2$ to the Conley-Zehnder index.

2) From the above axioms, we can prove that since for any $A\in Sp(1_{\mathbb{C}})$ we have $det(A)=1$, we can determine the parity of $\mu_{CZ}(\Phi)$ by merely looking at the eigenvalues of $\Phi(T)$. More precisely, $\mu_{CZ}(\Phi)$ is even if $\Phi(T)$ has real positive eigenvalues $\lambda$ and $\frac{1}{\lambda}$ (we call $\Phi$ positively hyperbolic in this case) and $\mu_{CZ}(\Phi)$ is odd if either $\Phi(T)$ has real negative eigenvalues $\lambda$ and $\frac{1}{\lambda}$ (we call $\Phi$ negatively hyperbolic in this case) or $\Phi(T)$ has complex conjugate eigenvalues $e^{\pm i\phi}$ (we call $\Phi$ elliptic in this case).

3) There is a sophisticated iteration theory relating $\mu_{CZ}(\Phi)$ to $\mu_{CZ}(\Phi^m)$ for $m\in \mathbb{N}$, see \cite{Long} for instance. In this paper, we only deal with the easiest case which is when $\Phi$ is (positively or negatively) hyperbolic. In this case, for any $m \in \mathbb{N}$:
$$\mu_{CZ}(\Phi^m)=m\cdot \mu_{CZ}(\Phi).$$
\end{remark}

\begin{remark}\label{czdef}
1) There are other equivalent definitions of Conley-Zehnder indices. In particular, we can define $\mu_{CZ}(\Phi)$ as the algebraic intersection of $\Phi$ and the subvariety given by

$$Sp(1_{\mathbb{C}}) \setminus Sp^* (1_{\mathbb{C}}) =\{A \in Sp(1_{\mathbb{C}}) | det(A-Id) = 0 \}.$$
 We can formulate the proof of our main theorem using this definition. But we find the axiomatic definition more intuitive for a non-expert.
 
 2) We can extend the above definition to higher dimensions as well as degenerate paths of symplectic linear maps, referred to as Robbin-Salamon indices. But considering non-degenerate paths is enough for our purpose \cite{Long}\cite{gutt}.
\end{remark}

Given a periodic Reeb orbit $\gamma$ of $X_\alpha$ for a contact manifold, fixing a symplectic trivialization $\nu$ of $(\xi|_\gamma,d\alpha)$ along $
\gamma$ and picking a point $p\in \gamma$, the flow of $X_\alpha$ defines a path of symplectic linear maps

$$\Phi:[0,T]\rightarrow Sp(1_{\mathbb{C}})$$

\noindent such that $\Phi(0): \xi |_p \rightarrow \xi |_p =Id$ and $T$ is the period of $\gamma$. Notice that we used the fact that $X_\alpha$ preserves $d\alpha$ and therefore $d\alpha|_\xi$. Now we can use all the terminology of being non-degenerate, (positively or negatively) hyperbolic and elliptic, directly for the period orbit $\gamma$. Abusing notation and assuming that $det(\phi (T)-Id) \neq 0 $, we define the Conley-Zehnder index of $\gamma$ with respect to $\nu$ to be the Conley-Zehnder index of the induced path of symplectic maps and write it as $\mu_{CZ}^\nu (\gamma)$.

 Assume $[\gamma]=0\in H_1(M)$ and let $\Sigma$ be a 2-chain such that $\partial \Sigma =\gamma$. By obstruction theory, $\Sigma$ defines a trivialization of $\xi$ along $\gamma$ which is unique up to homotopy. The fact that the endpoint of the induced path is independent of such trivialization (it is simply the Poincare' return map $\Phi(T):\xi|_p \rightarrow \xi|_p$) and homotopy invariance property of Conley-Zehnder indices guarantee that Conley-Zehnder index of $\gamma$ only depends on $\Sigma$ and hence, we use the notation $\mu_{CZ}^\Sigma (\gamma)$. Moreover, it is well known (for instance see \cite{sft}) that if $\Sigma_1$ and $\Sigma_2$ are two of such 2-chains for $\gamma$, we can compute the difference of the induced Conley-Zehnder indices by

$$\mu_{CZ}^{\Sigma_1}(\gamma) - \mu_{CZ}^{\Sigma_2}(\gamma) = \langle 2e(\xi), \Sigma_1 \sqcup -\Sigma_2 \rangle$$

\noindent where $-\Sigma_2$ refers to $\Sigma_2$ with reversed orientation. Note that $\Sigma_1 \sqcup -\Sigma_2$ is a cycle and therefore, represents and element of $H_2(M)$. In particular, $\mu_{CZ}(\gamma)$ is well-defined if $2e(\xi)=0$ or $H_2(M)=0$.

\begin{remark}\label{czreeb}
 For any periodic Reeb orbit $\gamma$ and $m\in\mathbb{N}$, going $m$ rounds around $\gamma$ is also a periodic Reeb orbit which we denote by $\gamma^m$ and similar iteration theory mentioned as in Remark~\ref{czbasic} part 3) shows that fixing a symplectic trivialization of $(\xi|_{\gamma},d\alpha)$, for a (positively or negatively) hyperbolic periodic Reeb orbit $\gamma$:

$$\mu_{CZ}(\gamma^m)=m\cdot \mu_{CZ}(\gamma).$$
\end{remark}

We can finally state celebrated result of Hofer \cite{hofermain} and Hofer-Wysocki-Zehnder \cite{hwz} (\cite{wendl} can be helpful as well):

\begin{theorem}\label{hofermain}
Let $(M^{3},\xi)$ be a contact manifold which satisfies one of the followings:

1) $\xi$ is overtwisted;

2) $M$ is reducible;

3) there exists an exact symplectic cobordism from $(M,\xi)$ to $(\mathbb{S}^3,\xi_{std})$.

\noindent Then any associated Reeb vector field for $(M,\xi)$ admits a contractible unknotted periodic orbit $\gamma$ with 
$$\mu_{CZ}^D (\gamma)=2$$
in the first two case and
$$\mu_{CZ}^D (\gamma)\in \{2,3 \}$$
in the third case, where $D$ is the contraction disk for $\gamma$.
\end{theorem}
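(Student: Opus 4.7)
The plan is to use Hofer's theory of pseudoholomorphic curves in symplectizations. In each of the three cases I would produce a finite-energy pseudoholomorphic plane $u : \mathbb{C} \to \mathbb{R} \times M$, with respect to an $\alpha$-adapted almost complex structure on the symplectization, whose positive asymptotic is the desired contractible periodic Reeb orbit $\gamma$. The projection of the image of $u$ to $M$ provides a capping disk $D$ for $\gamma$, and the Fredholm/Riemann-Roch formula for finite-energy planes expresses the index of $u$ in terms of $\mu_{CZ}^D(\gamma)$. Nonnegativity of this index (after factoring out the reparametrization group and the $\mathbb{R}$-translation in the target) is what pins down the precise Conley-Zehnder values asserted in the theorem.

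For case (1), I would follow Hofer's original Bishop-disk argument. Given an overtwisted disk $D \subset M$, perturb the contact form $\alpha$ so that the characteristic foliation on $D$ has a single positive elliptic singularity $e$ and $\partial D$ as its unique limit cycle. The elliptic point seeds a one-parameter family of small $J$-holomorphic disks with boundary on $D$, the Bishop family. Using the monotone area bound and Gromov-Hofer compactness, the family either closes up into a ball or bubbles off; the former possibility is obstructed by the topology of $\partial D$, so a bubble must form. The bubble is a finite-energy $J$-holomorphic plane whose asymptotic $\gamma$ bounds a disk inside $D$, and a direct Maslov-index computation yields $\mu_{CZ}^D(\gamma)=2$.

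For case (2), I would start from an essential embedded sphere $\Sigma \subset M$, make it convex for $\xi$, and apply SFT-style neck-stretching along $\Sigma$ in the symplectization. One of the two resulting pieces caps off like a punctured ball and carries holomorphic planes; these survive in the limit as finite-energy planes in $\mathbb{R}\times M$ asymptotic to a contractible Reeb orbit, and the same index calculation yields $\mu_{CZ}^D(\gamma)=2$. For case (3), I would cap off the positive end of the exact cobordism with the standard Liouville ball filling $B^4$ of $(\mathbb{S}^3,\xi_{std})$, producing a symplectic manifold $\widehat{W}$ with $(M,\xi)$ as a concave end. The standard $B^4$ is foliated by $J$-holomorphic planes asymptotic to a Hopf orbit of Conley-Zehnder index $3$. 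Stretching the neck between the cobordism and the capped filling converts one such plane, via SFT compactness, into a holomorphic building whose bottom level is a finite-energy plane in $\mathbb{R}\times M$; index conservation across the building, together with nonnegativity of the Fredholm index of the intermediate cobordism levels, restricts $\mu_{CZ}^D(\gamma)$ to $\{2,3\}$.

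The hard part is analytic rather than structural: establishing SFT-type compactness and the convergence to an honest holomorphic building, securing transversality (often through automatic transversality for low-index planes in dimension $3$, or by generic perturbation of $J$ away from the symplectization symmetry), and performing the Conley-Zehnder computation in the trivialization induced by the capping disk. Once those ingredients are in place, existence of the plane and the asserted index bound follow from a Fredholm count combined with the homotopy invariance axiom of $\mu_{CZ}$; this is precisely what Hofer and Hofer-Wysocki-Zehnder established in the cited references.
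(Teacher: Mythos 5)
The paper does not prove this theorem at all: it is imported verbatim as a black box from Hofer \cite{hofermain}, Hofer--Wysocki--Zehnder \cite{hwz}, and Cioba--Wendl \cite{wendl}, and the surrounding text only explains the terminology (exact cobordism) before applying the result in Section~\ref{4}. So there is no in-paper argument to compare yours against; what you have written is an outline of the proofs in those references, and at the strategic level it is faithful: the Bishop family seeded at an elliptic singularity of the overtwisted disk, bubbling of a finite-energy plane, and the index count giving $\mu_{CZ}^D(\gamma)=2$ is exactly Hofer's case (1); the capping of the exact cobordism by the Liouville ball, the foliation of $B^4\setminus\{0\}$ by planes asymptotic to Hopf orbits of index $3$, and SFT neck-stretching with index positivity forcing $\mu_{CZ}^D(\gamma)\in\{2,3\}$ is the Cioba--Wendl argument for case (3). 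Two caveats are worth flagging. First, for case (2) Hofer's original argument does not neck-stretch along a convex sphere; he runs the Bishop family directly on an essential embedded sphere perturbed to have exactly two elliptic singularities at its poles, and shows the two families cannot meet (else the sphere would bound), so a plane bubbles off; your neck-stretching variant is a legitimate modern substitute but is not what the cited source does. Second, the clean statement ``$\mu_{CZ}^D(\gamma)=2$'' presupposes nondegeneracy of $\gamma$; the theorems in \cite{hofermain} and \cite{hwz} are actually phrased as ``either the orbit is degenerate or it is unknotted with self-linking number $-1$ and $\mu_{CZ}^D=2$,'' and removing the degenerate alternative (as the statement here does) requires an additional limiting argument over a sequence of nondegenerate perturbations of $\alpha$. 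Your sketch, like the paper's citation, glosses over this, and it is the one place where the assertion as literally stated needs extra care. Establishing unknottedness also needs the positivity-of-intersection argument showing the bubbled plane is embedded and projects injectively, which you do not mention but which is standard in the cited work.
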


\begin{remark}
An exact symplectic cobordism from $(M_-,\xi_-)$ to $(M_+,\xi_+)$ is $(X,\omega)$ such that $\partial X=(-M_-) \sqcup (M_+)$ and there exists a global Liouville vector field $Y$ with $\alpha:=\iota_Y \omega$ being a contact form for $\xi_-$ and $\xi_+$, when restricted to the boundary of $X$. It is worth mentioning that the third case above is a very large class of contact manifolds, including all overtwisted contact manifolds (i.e. it includes case 1)) \cite{eh}.
\end{remark}
\subsection{Anosovity of Contact 3-Manifolds}

In this paper, we study contact dynamics when the Reeb vector field $X_{\alpha}$ has certain dynamical properties of Anosovity type. \textit{Conformally Anosov flows}, independently introduced by Mitumatsu \cite{Mitsumatsu} (as \textit{projectively Anosov flows}) and Elishberg-Thurston \cite{confoliations}, are generalizations of the well-studied \textit{Anosov flows}, which seem to be more natural from topological point of view. Anosov flows are known to have strong rigidity properties and put strong restrictions on the topology of the underlying manifold. Although it was not known a priori, it turns out that conformally Anosov flows are abundant \cite{asa}. The content of our main theorem is that unlike the general case, they enjoy similar rigidity properties as Anosov flows in the context of contact topology. i.e. when they show up as Reeb vector fields.

\begin{definition}
We call the vector field $X$ conformally Anosov if there exists a splitting $TM=E^s \oplus E^u \oplus \langle X \rangle$, such that the splitting is continuous and invariant under $X$ and 
$$ ||d\phi^t (v)  || / ||d\phi^t (u) || \geq Ae^{Ct}||v || /||u ||$$
for any $v \in E^u$ ({\ttfamily"}unstable direction{\ttfamily"}) and $u \in E^s$ ({\ttfamily"}stable direction{\ttfamily"}), where $\phi^t$ is the flow of $X$ and $C$ and $A$ are positive constants.
\end{definition}

Note that $E^s \oplus \langle X\rangle$ and $E^u \oplus \langle X\rangle$ are integrable, assuming $X_\alpha$ is at least $C^1$, since the splitting is invariant under the flow.

Even in the general case, we can use contact geometry to study such flows.

\begin{proposition}[Eliashberg-Thurston 1998\cite{confoliations}, Mitsumatsu 1995\cite{Mitsumatsu}]\label{caxi}
A vector field $X$ is conformally Anosov if $\langle X \rangle=\xi_{+} \cap \xi_{-}$, where $\xi_+$ and $\xi_-$ are transverse positive and negative contact structures respectively. Conversely, Any vector field directing the intersection of transverse positive and negative contact structures is conformally Anosov.
\end{proposition}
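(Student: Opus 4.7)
The plan is to prove both implications by working in the $2$-dimensional quotient bundle $Q := TM/\langle X\rangle$, on which the derivative $d\phi^t$ of the flow induces a family of linear isomorphisms between fibers. Each plane field on $M$ containing $X$ descends to a line field in $Q$, and the pivot for the whole argument is that, after fixing an auxiliary transverse orientation, the sign of contactness of such a plane field coincides with the direction of rotation of the corresponding line field in $Q$ under the induced flow. This reduces both directions of the proposition to statements about rotating line fields on a $2$-plane bundle.

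For the forward direction, I would begin with the invariant splitting $TM = E^s \oplus E^u \oplus \langle X\rangle$, which descends to a flow-invariant splitting $\bar{E}^s \oplus \bar{E}^u$ of $Q$ with $\bar{E}^u$ expanded relative to $\bar{E}^s$ at a uniform exponential rate. Fixing an auxiliary metric on $Q$ and identifying the space of unoriented lines in each fiber with $\mathbb{R}/\pi\mathbb{Z}$, the induced flow becomes a circle diffeomorphism in which $\bar{E}^u$ is an attractor and $\bar{E}^s$ a repeller; every other line rotates monotonically from $\bar{E}^s$ toward $\bar{E}^u$. I would then pick two smooth line fields $\ell_+,\ell_-$ in the two open arcs of the circle that separate $\bar{E}^s$ from $\bar{E}^u$ and set $\xi_\pm := \langle X\rangle \oplus \ell_\pm$. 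Transversality is automatic from $\ell_+ \neq \ell_-$, and the contact conditions reduce to checking that the infinitesimal rotation of $\ell_\pm$ under the flow is of uniform sign. Since the monotone rotation toward $\bar{E}^u$ has strictly opposite signs on the two arcs, $\ell_+$ and $\ell_-$ lying in opposite arcs yield respectively a positive and a negative contact structure.

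For the converse direction, I take contact forms $\alpha_\pm$ defining $\xi_\pm$ with $\alpha_\pm(X)=0$, and rewrite the contact conditions $\pm\,\alpha_\pm\wedge d\alpha_\pm > 0$ as sign conditions on $d\alpha_\pm(X,Y) = -\alpha_\pm([X,Y])$ for any $Y \in \xi_\pm$ transverse to $X$. The right-hand side has a geometric meaning: modulo $X$, the bracket $[X,Y]$ encodes the infinitesimal rotation rate of $\bar{\xi}_\pm$ in $Q$ under the induced flow. Once a transverse orientation is fixed, positive and negative contactness translate, respectively, into $\bar{\xi}_+$ rotating strictly one way and $\bar{\xi}_-$ strictly the other. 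The two arcs bounded by $\bar{\xi}_+$ and $\bar{\xi}_-$ in each fiber become forward- and backward-invariant cone fields $C^u$ and $C^s$, and I would define
$$E^u_p := \bigcap_{t\geq 0} d\phi^t\bigl(C^u_{\phi^{-t}(p)}\bigr), \qquad E^s_p := \bigcap_{t\geq 0} d\phi^{-t}\bigl(C^s_{\phi^t(p)}\bigr),$$
arguing that these nested intersections collapse to lines. The invariant splitting $TM = E^s \oplus E^u \oplus \langle X\rangle$ is then immediate, and the conformal Anosov estimate should follow from the uniform positive lower bound on the pointwise rotation speeds, guaranteed by compactness of $M$ and strict positivity of the contactness inequalities.

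The main obstacle is the quantitative exponential estimate in the converse direction. The qualitative picture — invariant cones and an induced continuous splitting — comes essentially for free from the signs of rotation of $\bar{\xi}_\pm$, but upgrading to the uniform exponential ratio $\|d\phi^t v\|/\|d\phi^t u\| \geq A e^{Ct}\|v\|/\|u\|$ requires integrating the pointwise rotation speeds over time and comparing, for instance through a cross-ratio argument involving the four lines $\bar{E}^s, \bar{E}^u, \bar{\xi}_+, \bar{\xi}_-$ in each fiber, together with control over how the auxiliary metric on $Q$ is distorted under the flow. Keeping this distortion bounded uniformly in $t$ is the delicate step, and is where the compactness of $M$ and the strict contactness are really used.
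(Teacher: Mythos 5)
The paper does not actually prove this proposition: it is imported from Mitsumatsu and Eliashberg--Thurston, and the only in-text justification is the discussion immediately following it, which records precisely your pivot --- positivity (negativity) of $\xi_\pm$ is the sign condition $g([e_\pm,X],e_\mp)>0$, i.e.\ a sign on the instantaneous rotation of the line $\xi_\pm\cap(E^s\oplus E^u)$ under the flow. Your reduction to rotating line fields in $Q=TM/\langle X\rangle$, and the strictly invariant cone fields bounded by $\bar{\xi}_+$ and $\bar{\xi}_-$ in the converse direction, are the standard arguments from those sources, so the route is the intended one.

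There is, however, a genuine gap in your forward direction. The conformal Anosov definition only gives $\|d\phi^t v\|/\|d\phi^t u\|\ge Ae^{Ct}\|v\|/\|u\|$ with a constant $A$ that may be much smaller than $1$; with respect to an arbitrary auxiliary metric on $Q$ this makes $\bar{E}^u$ an attractor for the induced circle maps at large times, but it does \emph{not} give the monotone rotation you assert, and in particular gives no control at $t=0$. Since the contact condition for $\langle X\rangle\oplus\ell_+$ is exactly a pointwise sign condition on $\alpha_+([X,e_+])$ --- an infinitesimal, time-zero statement --- a line in the open arc could a priori rotate the wrong way for a bounded time before being swept toward $\bar{E}^u$, and your $\xi_+$ would fail to be contact there. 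The standard repair is to first replace the metric on $Q$ by an adapted one (e.g.\ by averaging the pulled-back metrics over a sufficiently long time interval with an exponential weight), after which the domination holds with $A=1$ and every line strictly between $\bar{E}^s$ and $\bar{E}^u$ rotates strictly for all $t>0$; only then does your choice of smooth $\ell_\pm$ in the two open arcs (which are globally distinguishable because $Q$ is orientable, even though $E^s$ and $E^u$ need not be) produce genuine positive and negative contact structures. In the converse direction your outline is sound and you have correctly located the remaining work: the collapse of the nested cones to lines and the exponential domination both follow from strict cone invariance plus compactness, most cleanly via contraction of the Hilbert projective metric on the cones, which is the rigorous form of the cross-ratio argument you propose.
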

\begin{proposition}
With the above notation,
$$ E^u  = \lim_{t \rightarrow + \infty} (T\phi^t)\xi_+=\lim_{t \rightarrow + \infty} (T\phi^t)\xi_-$$

$$ E^s  = \lim_{t \rightarrow - \infty} (T\phi^t)\xi_+=\lim_{t \rightarrow - \infty} (T\phi^t)\xi_-$$
\end{proposition}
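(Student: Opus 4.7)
The approach is to pass to the quotient bundle $Q := TM / \langle X \rangle$, on which $\phi^{t}$ induces a well-defined fiberwise linear map (since $\phi^{t}$ preserves $\langle X\rangle$), and to exploit the classical fact that on a rank-$2$ bundle carrying a hyperbolic splitting, every line other than the stable line is forward-asymptotic to the unstable line.

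First, I would observe that because $\langle X\rangle \subset \xi_{+} \cap \xi_{-}$ and $\langle X\rangle$ is transverse to both $E^{s}$ and $E^{u}$, each of the bundles $\xi_{+}, \xi_{-}, E^{s}, E^{u}$ descends to a well-defined line subbundle $\overline{\xi_{+}}, \overline{\xi_{-}}, \overline{E^{s}}, \overline{E^{u}}$ of $Q$. The splitting $\overline{E^{s}} \oplus \overline{E^{u}} = Q$ is invariant under the induced map, and the conformal Anosov estimate transfers directly: for unit vectors $\bar u \in \overline{E^{s}}$ and $\bar v \in \overline{E^{u}}$ (with respect to an auxiliary Riemannian metric),
$$\|(T\phi^{t})\bar u\| / \|(T\phi^{t})\bar v\| \le A^{-1} e^{-Ct} \longrightarrow 0 \text{ as } t \to +\infty.$$

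Next, I would verify that $\overline{\xi_{\pm}}$ coincides with neither $\overline{E^{s}}$ nor $\overline{E^{u}}$ on any open set. If, for instance, $\xi_{+} \equiv E^{s} \oplus \langle X\rangle$ on an open set $U$, then since $E^{s} \oplus \langle X\rangle$ is integrable (as noted immediately before Proposition~\ref{caxi}), $\xi_{+}|_{U}$ would be integrable, contradicting the Frobenius non-integrability condition $\alpha_{+} \wedge d\alpha_{+} \neq 0$ on $U$; the remaining three coincidences are ruled out analogously. This guarantees that on a dense open set the initial direction $\overline{\xi_{\pm}}$ sits transversely to $\overline{E^{s}}$.

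The proof then becomes a one-line two-dimensional calculation in $Q$: at such a point $p$, write a generator of $\overline{\xi_{+}}(p)$ as $w = a\bar u + b\bar v$ with $\bar u \in \overline{E^{s}}(p)$, $\bar v \in \overline{E^{u}}(p)$, and $b \neq 0$ by the previous step. Then $(T\phi^{t}) w = a (T\phi^{t})\bar u + b (T\phi^{t})\bar v$, and the exponential decay of the $\overline{E^{s}}$ component relative to the $\overline{E^{u}}$ component forces the projective direction of $(T\phi^{t}) w$ to converge to $\overline{E^{u}}$. Lifting back to $TM$, this says $(T\phi^{t}) \xi_{+} \to E^{u} \oplus \langle X\rangle$ as $t \to +\infty$; the identical argument works for $\xi_{-}$, and reversing time gives the stable counterparts.

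The main obstacle I foresee is interpretational rather than technical: the ``limit'' in the statement is most naturally read in the Grassmannian of $2$-planes containing $X$ (equivalently, in the projectivization of $Q$), so that the dimensional mismatch between the line $E^{u}$ and the plane $(T\phi^{t})\xi_{\pm}$ is resolved by implicitly adjoining $\langle X\rangle$. Once that convention is fixed and the quotient reduction is performed, the convergence is a standard consequence of two-dimensional hyperbolic linear algebra, with the contact (non-integrability) condition serving only to exclude the exceptional initial direction $\overline{E^{s}}$.
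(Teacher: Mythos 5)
The paper states this proposition without proof (it is quoted as part of the Mitsumatsu/Eliashberg--Thurston correspondence of Proposition~\ref{caxi}), so there is no in-text argument to compare against; your reduction to the quotient bundle $Q=TM/\langle X\rangle$, the dominated splitting $\overline{E^s}\oplus\overline{E^u}$ of $Q$, and the two-dimensional linear algebra are indeed the standard route, and your remark that the limit must be read in the Grassmannian of planes containing $X$ (i.e.\ as $E^u\oplus\langle X\rangle$) is the right way to fix the dimensional mismatch. However, there is a genuine gap in the step where you exclude the exceptional initial direction. The proposition asserts the limit at \emph{every} point, while your linear-algebra step only gives convergence at points $p$ where $\overline{\xi_+}(p)\neq\overline{E^s}(p)$, and your integrability argument only rules out the coincidence $\xi_+=E^s\oplus\langle X\rangle$ on an \emph{open set}. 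It says nothing about isolated or measure-zero tangencies, and these are fatal rather than negligible: if $\xi_+(p)=(E^s\oplus\langle X\rangle)(p)$ at a single point $p$, then since $E^s\oplus\langle X\rangle$ is flow-invariant and $T\phi^t$ is fiberwise linear, $(T\phi^t)\xi_+(p)=(E^s\oplus\langle X\rangle)(\phi^t p)$ for \emph{all} $t$, so the forward limit at $p$ would be $E^s$, not $E^u$. Density of the good set does not rescue the statement, because the convergence you establish degenerates as the coefficient ratio $|a|/|b|$ blows up near such a point.

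The missing ingredient is a \emph{pointwise} exclusion of tangency between $\overline{\xi_\pm}$ and $\overline{E^s}$, $\overline{E^u}$, and the standard way to get it is the strict cone-field argument rather than integrability. In each fiber of the projectivized bundle $P(Q)$, let $C$ be the closed cone (arc) bounded by $\overline{\xi_+}$ and $\overline{\xi_-}$ on the side into which both rotate under the flow; the positivity of $\xi_+$ and negativity of $\xi_-$ (the conditions $g([e_+,X],e_-)>0$ and $g([e_-,X],e_+)>0$ discussed after Proposition~\ref{caxi}) say exactly that for $t>0$ the two boundary lines of $C_p$ are carried by $T\phi^t$ strictly into the interior of $C_{\phi^t p}$. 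Hence $\overline{E^u}(q)=\bigcap_{t>0}(T\phi^t)C_{\phi^{-t}q}$ lies in the interior of $C_q$, and symmetrically $\overline{E^s}$ lies in the interior of the complementary cone, so $\overline{\xi_\pm}$ never coincides with $\overline{E^s}$ or $\overline{E^u}$ at any point. With this substituted for your open-set integrability step, the rest of your argument goes through verbatim; note also that the same nested-cone intersection essentially re-proves the proposition directly.
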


Note that the positive (negative) contact condition for $\xi_+$ ($\xi_-$) reads as $g([e_+,X] , e_-)>0$ ($g([e_-,X] , e_+)>0$), where $g$ is any arbitrary metric and $e_+$ and $e_-$ are vector fields in $\xi_+ \cap E^s \oplus E^u$ and $\xi_- \cap E^s \oplus E^u$ respectively, such that $(e_+,e_-)$ is an oriented basis for $\xi$. This means that the image of $e_+$ ($e_-$) under the flow {\ttfamily"}rotates{\ttfamily"} in a right (left) handed fashion, from $E^s$ towards $E^u$ (see Figure 1(a)).

 \begin{figure}

 \begin{subfigure}[b]{0.4\textwidth}

  \center \begin{overpic}[width=4cm]{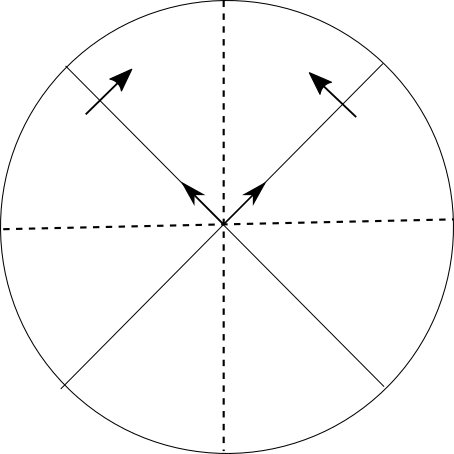}
  \put(72,65){$e_+$}
  \put(32,65){$e_-$}
         \put(10,105){$\xi_-$}
         \put(95,105){$\xi_+$}
         \put(57,120){$E^u$}
         \put(120,60){$E^s$}
       \end{overpic}
    \caption{Confomally Anosov dynamics}
    \label{fig:1}
  \end{subfigure}
  \hspace{2cm}
  \begin{subfigure}[b]{0.4\textwidth}
  \center \begin{overpic}[width=7cm]{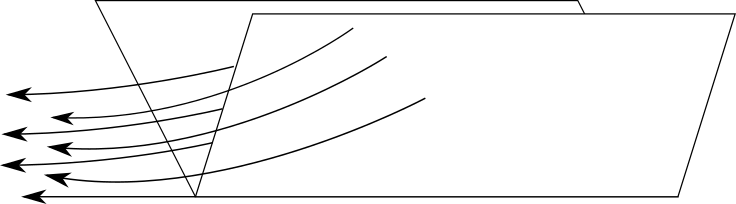}
  \put(-20,10){$X$}
  \put(30,48){$\xi_-$}
  \put(110,30){$\xi_+$}
  \end{overpic}
    \vspace{0.3cm}
    \caption{Conformally Anosov flows as intersection of positive and negative contact structures}
    \label{fig:2}
  \end{subfigure}
  
 \caption{Conformally Anosov flows}
\end{figure}

An Anosov vector field $X$ can similarly be defined as a vector field which preserves a continuous splitting $TM=E^s \oplus E^u \oplus \langle X \rangle$ and
$$||d\phi^t (v)  ||  \geq A_1 e^{C_1t}||v || \text{  for any } v \in E^u$$
$$||d\phi^t (u) || \leq A_2 e^{-C_2t}||u || \text{  for any } u \in E^s,$$
\noindent where $\phi^t$ is the flow of $X$ and $C_1$, $C_2$, $A_1$ and $A_2$ are some positive constants. Prototypical examples of Anosov flows are geodesic flows on $UT\Sigma$, where $\Sigma$ is a hyperbolic surface and suspensions of Anosov diffeomorphisms of $\mathbb{T}^2$. Anosov flows enjoy various topological properties. In particular, the underlying manifold needs to have a fundamental group with \textit{exponential growth} (for classic results about Anosov flows refer to \cite{anosov}\cite{plante})This rules out the possibility of manifolds like $\mathbb{S}^3$ or $\mathbb{T}^3$ admitting such flows. Conformally Anosov flows can be defined as flows which have the same action on the projectified tangent space as of Anosov flows. By now, it is known that these flows are abundant. For instance, both $\mathbb{S}^3$ and $\mathbb{T}^3$ admit infinitely many (distinct up to isotopy) conformally Anosov flows \cite{asa}.

We are interested in these flows in the category of contact manifolds.

\begin{definition}
We call a conformally Anosov vector field $X$ contact if $E^s \oplus E^u =\ker{\alpha}$ for some smooth contact form $\alpha$. Equivalently, we call $\xi:=\ker{\alpha}$ a conformally Anosov contact structure.
\end{definition}
 Notice that $X$ is a Reeb vector field for the contact structure $E^s \oplus E^u$ for an appropriate choice of contact form, since $X$ preserves the plane $E^s \oplus E^u$. Also it is well known that smoothness of the corresponding contact form yields the smoothness of stable and unstable foliations.

 Similarly, we can define contact Anosov flows and Anosov contact structures (when $X$ is Anosov). The kernel of the tautological 1-form on $UT\Sigma$, $\Sigma$ being a hyperbolic surface, is an example of Anosov contact structures. In fact, these have been the only known examples, until recently when \cite{foulonh} gave the first examples of Anosov contact structures on a family of hyperbolic 3-manifolds. It is not known much how bigger the class of conformally Anosov contact structures are, but our main result will show that they do satisfy similar contact topological properties of Anosov contact structures.

 \section{Riemannian Geometry of Conformally Anosov Contact 3-Manifolds}\label{3}
 
 Conformally Anosov contact 3-manifolds have been previously studied, mainly by Blair and Perrone, in the context of Riemannian geometry of contact manifolds. In this section, we will review previous results along these lines in order to state the consequences of our main theorem for \textit{{\ttfamily"}compatible Riemannian geometry{\ttfamily"}}, as well as to improve one of the results. The first result by Blaire and Perrone \cite{blperr} establishes conformal Anosovity of contact structures assuming certain restriction on curvature of a \text{compatible} Riemannian metric. We will improve this result to Anosovity of the underlying contact structure. In order to do so, we will use a new characterization of certain sectional and Ricci curvatures, given in \cite{hoz} by the author, which we find more suitable for our purpose. The second result, due to Perrone \cite{perr}, concludes conformal Anosovity based on the existence of a \textit{nowhere Reeb-invariant critical metric}. Riemannian geometry of contact manifold has been classically studied widely and we do not intend to draw a comprehensive picture of the topic. A classic reference in this field of study is \cite{bl}, which the curious reader should consult. The necessary background, in a spirit closer to our viewpoint, is also provided in \cite{etkmass1} and \cite{etkmass2}.
 
\subsection{Compatibility}

Given a contact 3-manifold $(M,\xi)$, we can naturally focus on a class of Riemannian metrics that are \textit{compatible} with $\xi$.

\begin{definition}
A Riemannian structure $g$ is compatible with $(M,\xi)$ if 
$$ g(u,v)=\frac{1}{\theta '} d\alpha(u,Jv)+\alpha(u)\alpha(v)$$
for $u,v \in TM$, where $\alpha$ is a contact form for $\xi$, $\theta '$ is a positive constant (called \textit{{\ttfamily"}instantaneous rotation{\ttfamily"}}) and $J$ is a complex structure on $\xi$, naturally extended to $TM$ by first projecting along the Reeb vector field associated with $\alpha$.

\end{definition}
 
 \begin{example}
$(\mathbb{S}^3,\xi_{std})$ and $(UT\Sigma, \theta)$ are compatible with round metric and the natural metric induced $UT\Sigma$ from the assumed metric on $\Sigma$, respectively.
 \end{example}
 
 \begin{remark}
  1) The positive real number $\theta '$ measures how fast the contact planes are {\ttfamily"}rotating{\ttfamily"}. Let $\{ u,v \}$ be an orthonormal basis for a plane field $\xi$ in a neighborhood and $n$ its normal vector field. At any point we can define:
 $$  \theta (t):= \cos^{-1} \left( \frac{g((\phi_{-t})_* v,n)}{||\phi_{-t})_* v||} \right)$$
 where $\phi$ is the flow of $u$. Note that $\theta ':=\theta' (0)=-g([u,v],n)=d\alpha (u,v)$ where $\alpha$ is the dual of $n$. The inequality $\theta ' >0$ is equivalent to the contact condition and for the compatibility definition, we consider this number to be constant on $M$.

2) The Reeb vector field $X_\alpha$ is orthonormal to $\xi$ and moreover, is a geodesic field. This helps us use \textit{Jacobi fields} as an effective tool in understanding the geometry and dynamics of such structures.

3) In the classical literature like \cite{bl}, the case of $\theta' =2$ is studied (named as \textit{{\ttfamily"}contact metrics{\ttfamily"}}), while we do not see such restriction necessary for our work.
 
 \end{remark}
 
Now for any plane field $\xi$ on a Riemannian manifold $(M,g)$, we can define an \textit{second fundamental form} by:

$$\mathbb{II}(u,v)= g(\nabla_u v,n)$$

\noindent where $n$ is the unit normal to $\xi$ and $u,v \in \xi$.

It can be easily shown that $\mathbb{II}$ is symmetric if and only if $\xi$ is integrable. We can define two geometric invariants of $\xi$ using this second fundamental form, namely the \textit{mean curvature} $ H(\xi):=trace (\mathbb{II}) $ and the \textit{extrinsic curvature} $ G(\xi):= det (\mathbb{II}(\xi))$. 

It can be seen that if $\xi$ is a contact structure and $g$ a compatible Riemannian metric, we will have

 $$ H(\xi)= - div_g (X_\alpha)=0,$$
 
 \noindent while we show in \cite{hoz} that $G(\xi)$ can be interpreted as the \textit{Ricci curvature} of $X_\alpha$:

 \begin{theorem}\label{cur}
 Let $(M^{3},\xi)$ be a contact 3-manifold and $g$ a compatible Riemannian metric. Then for any unit vector $e\in\xi$, the sectional curvature of the plane $\langle e,X_\alpha \rangle$ can be computed as:
 
 $$k(e,X_\alpha)=g(Je,\nabla_e X_\alpha)^2-g(e,\nabla_e X_\alpha)^2-\frac{\partial}{\partial t}g(e(t),\nabla_{e(t)} X_\alpha)|_{t=0}$$
 
 \noindent where $e(t):=\frac{\tilde{e}(t)}{|\tilde{e}(t)|}$ and $\tilde{e}(t)$ is the unique (locally defined) Jacobi field along $X_\alpha$ with $\tilde{e}(0)=e$ and $D\tilde{e}(0)=\nabla_{\tilde{e}(0)}X_\alpha$. Moreover,
 $$Ricci(X_\alpha):= k(e,X_\alpha)+k(Je,X_\alpha)=2\cdot G(\xi)=\frac{\theta'^2}{2}-2\cdot g(e,\nabla_e X_\alpha)^2- 2\cdot \left ( g(Je,\nabla_e X_\alpha)-\frac{\theta'}{2} \right )^2.$$
  \end{theorem}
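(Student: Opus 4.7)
The plan is to exploit that, in a compatible metric, the Reeb vector field $X_\alpha$ is a unit geodesic vector field, and to carry out Jacobi field analysis along the $X_\alpha$-geodesic $\gamma$ through a given base point. First I would extend the initial vector $e$ to a neighborhood of $\gamma$ by Lie-propagation, producing the unique local vector field $\tilde e$ with $[X_\alpha,\tilde e]=0$. Its restriction $\tilde e(t)$ to $\gamma$ is automatically a Jacobi field, and the vanishing bracket yields $D\tilde e = \nabla_{\tilde e}X_\alpha$ at every point of $\gamma$ (not merely at $t=0$). In particular $D\tilde e(0)=\nabla_e X_\alpha$, matching the Jacobi field in the statement.

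The key identity to establish is
\[
g\!\left(e(t),\,\nabla_{e(t)}X_\alpha\right)\;=\;\frac{d}{dt}\log|\tilde e(t)|,
\]
which is immediate from $e(t)=\tilde e(t)/|\tilde e(t)|$, the Lie-propagation relation $\nabla_{\tilde e}X_\alpha=D\tilde e$, and $g(\tilde e,D\tilde e)=\tfrac12 (|\tilde e|^2)'$. I would then apply the Jacobi equation $D^2\tilde e=-R(\tilde e,X_\alpha)X_\alpha$ together with $(|\tilde e|^2)''=2|D\tilde e|^2+2g(\tilde e,D^2\tilde e)$ at $t=0$. Using that $\nabla_e X_\alpha\perp X_\alpha$ (since $|X_\alpha|^2\equiv 1$) to get $|\nabla_e X_\alpha|^2=g(e,\nabla_e X_\alpha)^2+g(Je,\nabla_e X_\alpha)^2$, the computation yields $|\tilde e|''(0)=g(Je,\nabla_e X_\alpha)^2-k(e,X_\alpha)$. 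Combining this with $(\log|\tilde e|)''(0)=|\tilde e|''(0)-(|\tilde e|'(0))^2$ and $|\tilde e|'(0)=g(e,\nabla_e X_\alpha)$ and rearranging reproduces the stated formula for $k(e,X_\alpha)$.

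For the Ricci part, I would invoke $Ricci(X_\alpha)=2G(\xi)$ from \cite{hoz}, and then compute $2G(\xi)=2\det(\mathbb{II})$ directly in the orthonormal basis $\{e,Je\}$ of $\xi$. With $\mathbb{II}(u,v)=-g(v,\nabla_u X_\alpha)$, two pieces of input reduce every matrix entry to $a:=g(e,\nabla_e X_\alpha)$ and $b:=g(Je,\nabla_e X_\alpha)$: the divergence-free property $div_g(X_\alpha)=0$ forces $g(Je,\nabla_{Je}X_\alpha)=-a$, and unfolding $d\alpha(e,Je)=\theta'$ via $d\alpha(u,v)=g(v,\nabla_u X_\alpha)-g(u,\nabla_v X_\alpha)$ gives $g(e,\nabla_{Je}X_\alpha)=b-\theta'$. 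The determinant then simplifies to $-a^2-b(b-\theta')$, and completing the square in $b$ produces the right-hand side of the stated formula.

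The main obstacle is the Jacobi field bookkeeping in the sectional curvature step: verifying that the Lie-propagated extension gives $D\tilde e=\nabla_{\tilde e}X_\alpha$ along the whole of $\gamma$, which is what upgrades $g(e(t),\nabla_{e(t)}X_\alpha)$ into a logarithmic derivative of $|\tilde e|$, and then carefully tracking second-order terms without sign errors. Once that identity is secured, the remaining steps for both formulas reduce to linear algebra inside $\xi$ together with the contact compatibility constraint.
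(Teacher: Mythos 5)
The paper never proves Theorem~\ref{cur}: it is imported verbatim from the author's companion paper \cite{hoz}, so there is no internal argument to compare yours against. Judged on its own, your computation of the sectional curvature formula is correct and almost certainly the intended one. The Lie-propagated field $\tilde e$ with $[X_\alpha,\tilde e]=0$ is indeed a Jacobi field along the geodesic integral curves of $X_\alpha$ (since $\nabla_{X_\alpha}\nabla_{X_\alpha}\tilde e=\nabla_{X_\alpha}\nabla_{\tilde e}X_\alpha=R(X_\alpha,\tilde e)X_\alpha$), and your key identity $g(e(t),\nabla_{e(t)}X_\alpha)=\tfrac{d}{dt}\log|\tilde e(t)|$ is exactly the identity the paper itself invokes later in the proof of Theorem~\ref{neg}. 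The second-order bookkeeping checks out: writing $a=g(e,\nabla_eX_\alpha)$, $b=g(Je,\nabla_eX_\alpha)$, one gets $|\tilde e|''(0)=b^2-k(e,X_\alpha)$ from the Jacobi equation and $|\nabla_eX_\alpha|^2=a^2+b^2$, and then $(\log|\tilde e|)''(0)=|\tilde e|''(0)-a^2$ rearranges to the stated formula. The determinant computation is also right: $H(\xi)=0$ forces $g(Je,\nabla_{Je}X_\alpha)=-a$, and $d\alpha(e,Je)=\theta'$ unpacks to $g(e,\nabla_{Je}X_\alpha)=b-\theta'$, giving $\det(\mathbb{II})=-a^2-b(b-\theta')$, whose completed-square form is the right-hand side of the Ricci identity.

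The one genuine weakness is that you cite the equality $Ricci(X_\alpha)=2G(\xi)$ from \cite{hoz}, but that equality is itself one of the assertions of the theorem you are proving, so as written this step is circular. Fortunately it closes in three lines with the same machinery you already set up. Let $A=\nabla X_\alpha$; since $|X_\alpha|\equiv 1$ and $\nabla_{X_\alpha}X_\alpha=0$, $A$ annihilates $X_\alpha$ and preserves $\xi$. For any $v$ with $[X_\alpha,v]=0$ one has $(\nabla_{X_\alpha}A)v+A^2v=-R(v,X_\alpha)X_\alpha$ (the Riccati equation for the geodesic field $X_\alpha$). Tracing over an orthonormal basis of $\xi$ and using $\operatorname{tr}A=\operatorname{div}_gX_\alpha=0$ gives
\[
Ricci(X_\alpha)=k(e,X_\alpha)+k(Je,X_\alpha)=-\operatorname{tr}\bigl(A^2|_\xi\bigr)=-\Bigl[\bigl(\operatorname{tr}A|_\xi\bigr)^2-2\det\bigl(A|_\xi\bigr)\Bigr]=2\det\bigl(A|_\xi\bigr)=2\det(\mathbb{II})=2G(\xi),
\]
where the last two equalities use that in dimension two the matrix of $\mathbb{II}(u,v)=-g(v,\nabla_uX_\alpha)$ is $-[A|_\xi]^T$ and hence has the same determinant. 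With this inserted, your argument is a complete and self-contained proof of the statement.
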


\begin{remark}\label{nav}
Using the above characterization, we see:
$$Ricci(X_\alpha)\leq \frac{\theta'^2}{2},$$
and it can be easily shown \cite{hoz} that the equality holds at a point if and only if $\mathcal{L}_{X_\alpha}J=0$ (or equivalently $\mathcal{L}_{X_\alpha}g=0$), if and only if $g(e,\nabla_e X_\alpha)=0$ for \textit{every} unit vector $e\in \xi$ at that point. Noting that $H(\xi)=g(e,\nabla_e X_\alpha)+g(Je,\nabla_{Je} X_\alpha)=0$, that also means that if we have $\mathcal{L}_{X_\alpha}g \neq 0$ at a certain point, we get a natural splitting of $\xi$ at that point. More precisely, there will be exactly two orthogonal directions $\langle e_1 \rangle$ and $\langle e_2 \rangle$ for which $g(e_i,\nabla_{e_i} X_\alpha)=0$ ($i=1,2$), partitioning $\xi$ into four quadrants with alternating signs for $g(e,\nabla_e X_\alpha)\neq 0$ (see Figure 2). If such condition holds globally, this would be equivalent to defining a sub line field of $\xi$ which is useful for topological purposes. Moreover, the term $\frac{\partial}{\partial t}g(e(t),\nabla_{e(t)} X_\alpha)|_{t=0}$ in $k(e,X_\alpha)$ helps us understand the interplay of local behavior of the flow of $X_\alpha$ and curvature. Notice that since $X_\alpha$ is a geodesic field, we can use Jacobi fields associated to the variations of the Reeb vector field, which in \cite{hoz} we refer to them as $\alpha$-Jacobi fields, to study the geometry and dynamics of the Reeb flow. These are exactly the Jacobi fields $\tilde{e} (t)$, defined on a segment of Reeb flow and satisfying $[X_\alpha , \tilde{e} (t)]=0$ or equivalently $D\tilde{e}=\nabla_{\tilde{e}}X_\alpha$. We will exploit this viewpoint in the proof of Theorem \ref{neg}.
\end{remark}

\begin{figure}
  \center \begin{overpic}[width=4cm]{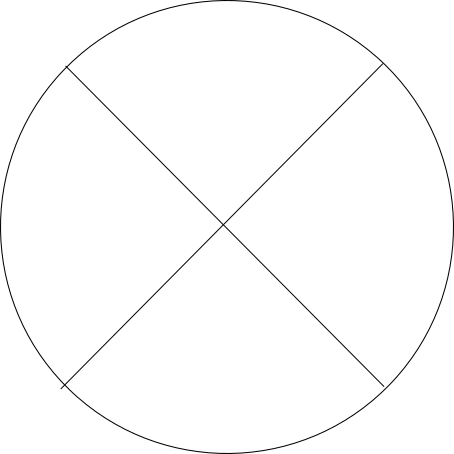}
  \put(-55,100){$g(e,\nabla_e X_\alpha)=0$}
  \put(100,100){$g(e,\nabla_e X_\alpha)=0$}
    \put(25,117){$g(e,\nabla_e X_\alpha) >0$}
      \put(115,60){$g(e,\nabla_e X_\alpha)<0$}
  \put(52,87){+}
  \put(22,57){\textemdash}
    \put(52,27){+}
  \put(82,57){\textemdash}
  
  \end{overpic}
\caption{Splitting of $\xi$ when $\mathcal{L}_{X_\alpha}g\neq 0$ and regions with alternating signs of $g(e,\nabla_e X_\alpha)$}
\end{figure}

\begin{remark}
In the classical literature \cite{rei}, the second fundamental form of a plane field is usually referred to the symmetric form derived from the above bilinear form. We do not consider this symmetrization and in fact, the second fundamental form defined above is asymmetric for contact structures.
\end{remark}

\subsection{Compatible Metrics with Negative $\alpha$-Sectional Curvature}

In \cite{blperr}, Blair and Perrone proved that for a contact structure and a compatible metric, if the sectional curvature of the planes including the Reeb direction satisfies a certain upper bound, in particular if it is negative, we can conclude the conformal Anosovity of the underlying contact structure. Calling such sectional curvatures $\alpha$-sectional curvatures, we will reprove their result using the characterization given above, as well as improve it to show that such contact structures are in fact Anosov. It is worth mentioning that the compatible Riemannian structure on $(UT\Sigma,\theta)$, when $\Sigma$ is a hyperbolic surface, satisfies this condition.

\begin{theorem}\label{neg}
Let $M^{3}$ be equipped with a contact structure $\xi$ and a compatible metric $g$, such that for any unit vector $e\in\xi$:

$$k(e,X_\alpha)<\left [ \frac{\theta'}{2} - \sqrt{\frac{\theta^{' 2}}{4}-\frac{1}{2}Ricci(X_\alpha)} \right ]^2$$

Then $X_\alpha$ is Anosov.
\end{theorem}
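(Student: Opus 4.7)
My plan is to derive a Riccati equation for the growth rate of unit vectors in $\xi$ along the Reeb flow using Theorem \ref{cur}, and then use the curvature hypothesis to produce a strictly invariant cone in $\xi$, thereby obtaining the Anosov splitting with a uniform exponential rate. Specifically, given a Reeb orbit $\gamma(t)$ and an $\alpha$-Jacobi field $\tilde{e}(t) \in \xi_{\gamma(t)}$, I would write $\tilde{e}(t) = r(t) e(t)$ with $|e(t)| = 1$ and set $a(t) := g(e(t), \nabla_{e(t)} X_\alpha)$ and $b(t) := g(Je(t), \nabla_{e(t)} X_\alpha)$. From $D\tilde{e} = \nabla_{\tilde{e}} X_\alpha$ one obtains $r'(t) = r(t)a(t)$, so $|\tilde{e}(t)| = |\tilde{e}(0)|\exp(\int_0^t a\,ds)$; meanwhile Theorem \ref{cur} is exactly the Riccati equation $a'(t) = b(t)^2 - a(t)^2 - k(e(t), X_\alpha)$.

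Next, I would exploit the second formula of Theorem \ref{cur} to observe that $a(e)^2 + (b(e) - \theta'/2)^2 = \theta'^2/4 - Ricci(X_\alpha)/2 =: R(p)^2$ is independent of $e$, so as $e$ rotates in $\xi_p$ the pair $(a, b - \theta'/2)$ traces a circle of radius $R(p)$, and in particular $a$ vanishes at exactly two antipodal directions in $P\xi_p$, where $b = \theta'/2 \pm R$. Writing $B := \theta'/2 - R$, the hypothesis is $k < B^2$, and a direct computation gives $b^2 - B^2 = R\theta'(1 \pm 1)$ at these two critical directions, hence $b^2 - B^2 \geq 0$ in both cases, with equality when $b = \theta'/2 - R$. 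The strict inequality $k < B^2$ therefore forces $a'(t) > 0$ whenever $a(t) = 0$.

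Consequently the closed cone field $C^u := \{a \geq 0\} \subset \xi$ is strictly forward-invariant under the tangent Reeb flow on $P\xi$: both boundary directions are pushed into $\{a > 0\}$, and no trajectory in $\{a > 0\}$ can reach $\{a = 0\}$ again without forcing $a' \leq 0$ at the crossing point, contradicting the previous step. Symmetrically $C^s := \{a \leq 0\}$ is strictly backward-invariant. Standard cone-field theory on $P\xi$ then yields a continuous invariant splitting $\xi = E^u \oplus E^s$ with $E^u \subset \{a > 0\}$ and $E^s \subset \{a < 0\}$, reproving the Blair-Perrone conformally Anosov conclusion. To upgrade this to Anosov, I use that $E^u$ is continuous on the compact manifold $M$ and strictly contained in the open cone $\{a > 0\}$: compactness yields a constant $c > 0$ with $a \geq c$ along $E^u$, and similarly $a \leq -c$ along $E^s$. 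Integrating along orbits then gives $|d\phi^t v| \geq e^{ct}|v|$ on $E^u$ and $|d\phi^t u| \leq e^{-ct}|u|$ on $E^s$, so $X_\alpha$ is Anosov.

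The main obstacle will be the algebra at the marginal critical direction $b = \theta'/2 - R$, where $b^2 = B^2$ exactly and only the strict inequality in the hypothesis produces $a' > 0$ --- there is no margin to spare, so the hypothesis has been tuned precisely to this borderline case. A secondary subtlety is confirming that the nested arcs $d\phi^t(C^u)$ collapse to a one-dimensional line field rather than a proper sub-arc; this is the projective-contraction content of the conformal Anosov step, and can be handled either by a Lyapunov argument on $P\xi$ or by appealing directly to Blair-Perrone.
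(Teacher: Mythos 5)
Your proposal is correct and follows essentially the same route as the paper: the identical key computation showing that the curvature hypothesis forces $\frac{\partial}{\partial t}\,g(e(t),\nabla_{e(t)}X_\alpha)>0$ at the two directions where $g(e,\nabla_e X_\alpha)=0$ (via $b^2\ge B^2>k$ there), followed by the same compactness-plus-integration argument that upgrades the conformally Anosov splitting to a genuine Anosov one. The only cosmetic difference is that you package the intermediate conformal Anosovity step as a strictly invariant cone field on $P\xi$, whereas the paper invokes the Mitsumatsu/Eliashberg--Thurston bi-contact criterion (Proposition~\ref{caxi}) applied to the planes spanned by $X_\alpha$ and the two critical directions.
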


\begin{proof}
Let $e_1, e_2 \in \xi$ be non-parallel unit vectors with $g(e_i, \nabla_{e_i} X_\alpha)=0$ for $i=1,2$ (see Remark~\ref{nav}). Then by Theorem \ref{cur}, we can easily compute for $i=1,2$:
$$g(Je_i,\nabla_{e_i} X_\alpha )^2=\left [ \frac{\theta'}{2} \pm \sqrt{\frac{\theta^{' 2}}{4}-\frac{1}{2}Ricci(X_\alpha)} \right ]^2$$

and our assumption on sectional curvature will imply:
$$\frac{\partial}{\partial t} g(e_i(t),\nabla_{e_i(t)} X_\alpha)>0.$$

Now by Proposition~\ref{caxi} and the following discussion, we know that $X_\alpha$ is conformally Anosov, since $\langle e_1,X_\alpha \rangle$ and $\langle e_2,X_\alpha \rangle$ are positive and negative contact structures. Also notice that this implies that for any $e\in E^u$ ($e\in E^s$) of this conformally Anosov flow, we have $g(e,\nabla_e X_\alpha) >0$ ($g(e,\nabla_e X_\alpha) <0$). See Figure 3.

Now in order to to prove that $X_\alpha$ is furthermore Anosov, choose $C>0$ such that for any unit vector $e\in E^u$ at any point in $M$, $g(e,\nabla_e X_\alpha) >C$ holds (such $C$ exists by the compactness of $M$). Using the notation of Theorem \ref{cur}, we will have:

$$g(e(t),\nabla_{e(t)} X_\alpha) =\frac{1}{2}\frac{\partial}{\partial t} \ln{g(\tilde{e}(t),\tilde{e}(t))}>C$$
and this implies
$$ \ln{g(\tilde{e}(t),\tilde{e}(t))} - \ln{g(\tilde{e}(0),\tilde{e}(0))}>Ct$$

$$ g(\tilde{e}(t),\nabla_{\tilde{e}(t)} X_\alpha) > e^{Ct}.$$

A similar argument for the unit vector $e \in E^s$ yields the Anosovity of $X_\alpha$.
\end{proof}

\begin{figure}
  \center \begin{overpic}[width=4cm]{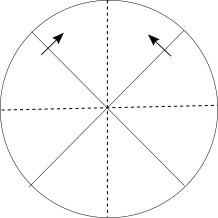}
  \put(-55,100){$g(e_2,\nabla_{e_2} X_\alpha)=0$}
  \put(100,100){$g(e_1,\nabla_{e_1} X_\alpha)=0$}
  \put(115,57){$E^s$}
  \put(52,117){$E^u$}
  \put(52,87){+}
  \put(22,60){\textemdash}
    \put(52,27){+}
  \put(82,60){\textemdash}
  
  \end{overpic}
\caption{Dynamics of contact structures admitting a compatible metric with negative $\alpha$-sectional curvature}
\end{figure}

\subsection{Nowhere Reeb-Invariant Critical Metrics}

In \cite{chernh}, Chern and Hamilton initiated the study of a particular class of compatible metrics, namely critical compatible metric, by stating a conjecture that can be generalized to:

\begin{Conjecture}
For any closed contact 3-manifold $(M,\xi)$, there exists a compatible metric that realizes the minimum (among compatible metrics) of the energy functional:

$$E(g):= \int_M  |\mathcal{L}_{X_\alpha} g|^2 \hspace{0.3cm}dVol(g).$$
\end{Conjecture}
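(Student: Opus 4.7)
The plan is to attack this via the direct method of the calculus of variations on the space $\mathcal{M}(\xi)$ of compatible metrics, parameterized by pairs $(\alpha, J)$ with $\ker \alpha = \xi$ and $J$ a $d\alpha$-compatible complex structure on $\xi$. Since $E(g) \geq 0$ on $\mathcal{M}(\xi)$, the infimum $E_0 := \inf_{g} E(g)$ is a finite non-negative number, and the task reduces to producing a minimizer realizing it.

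First I would work out the scaling behavior of $E$ under $\alpha \mapsto c\alpha$ for positive constant $c$: the Reeb field becomes $c^{-1} X_\alpha$, the volume form $\alpha \wedge d\alpha$ picks up a factor of $c^2$, and $g$ transforms in a computable way, yielding a scaling identity that should permit a normalization such as $\mathrm{vol}(M, g) = 1$. Fixing this normalization, take a minimizing sequence $g_n = (\alpha_n, J_n)$ with $E(g_n) \to E_0$ and write $\alpha_n = f_n \alpha_0$ relative to a reference contact form $\alpha_0$. The aim is subsequential convergence (possibly after pulling back by a sequence of contactomorphisms) to a limit $g_\infty \in \mathcal{M}(\xi)$; lower semicontinuity of $E$ would then give $E(g_\infty) = E_0$. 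As an alternative, one could derive and study the Euler-Lagrange equations for $E$ on $\mathcal{M}(\xi)$ — a line of inquiry Perrone has already begun — establish their elliptic character modulo natural gauge freedoms, and attack them either by Fredholm-type methods on the linearization or by setting up the associated gradient flow and analyzing its long-time behavior and convergence.

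The hardest step will be compactness of minimizing sequences. Such sequences can degenerate in two essentially independent ways: the conformal factor $f_n$ can blow up or concentrate in shrinking regions, and the complex structure $J_n$ can leave every compact subset of the fiberwise space of $d\alpha_n$-compatible complex structures on $\xi$. A Palais-Smale condition on $\mathcal{M}(\xi)$ is not obvious, and there is no natural symmetry group of $(M,\xi)$ one can quotient by for improved compactness, in contrast with conformal geometry problems such as Yamabe. Moreover, the case $E_0 = 0$ corresponds to the existence of a K-contact metric, which in dimension three places severe topological restrictions on $(M,\xi)$, so $E_0 = 0$ is not generic; the truly hard case is $E_0 > 0$, where one must exhibit the minimum as a non-trivial critical point without any obvious a priori geometric bound. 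I would expect the first real progress to come in settings with auxiliary symmetry — contact manifolds with a transverse circle action, or those supported by very symmetric open books — where $\mathcal{M}(\xi)$ admits natural finite-dimensional or at least compact slices on which the variational problem becomes tractable, and then to try to remove the symmetry assumption by perturbation or density arguments.
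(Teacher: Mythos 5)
The statement you are asked to prove is the (generalized) Chern--Hamilton conjecture, which the paper deliberately labels as a \emph{Conjecture}: it is an open problem, the paper offers no proof of it, and the only positive result cited is Rukimbira's resolution for the special class of generalized Boothby--Wang fibrations, where the minimum is realized by a metric with $\mathcal{L}_{X_\alpha}g=0$ everywhere. Indeed, the paper's own contribution runs in the opposite direction: its main theorem is used to show that on a large class of contact 3-manifolds a critical compatible metric \emph{cannot} be nowhere Reeb-invariant, i.e.\ it constrains what a minimizer could look like rather than producing one. So there is no ``paper's proof'' to compare yours against.

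As for your proposal, it is a sensible research outline but not a proof, and you have in effect identified the gap yourself: the direct method requires (i) a normalization that rules out trivial degeneration under rescaling of $\alpha$, (ii) sequential compactness of a normalized minimizing sequence in a topology in which $E$ is lower semicontinuous, and (iii) regularity of the limit so that it is again a smooth compatible metric. You state (i) as something ``to be worked out,'' offer no mechanism whatsoever for (ii) --- on the contrary, you list two independent degeneration modes (blow-up/concentration of the conformal factor $f_n$ and escape of $J_n$ to the boundary of the space of compatible complex structures) and concede that no Palais--Smale condition or symmetry reduction is available --- and do not address (iii) at all. The alternative Euler--Lagrange/gradient-flow route faces the same obstruction: without an a priori bound there is no long-time existence or convergence. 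Since the crucial existence step is left entirely open, the proposal does not establish the conjecture; it is a plausible plan whose hardest and essential ingredient is missing, which is consistent with the statement still being a conjecture in the literature.
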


Motivated by this conjecture, we can study the critical points of this energy functional restricted to the space of compatible metrics. We call such metrics \textit{critical compatible metrics}. 

This conjecture was proved by Rukimbira \cite{ruk} for a very specific class of contact manifolds, namely \textit{the generalized Boothby-Wang fibrations}, by characterizing such contact manifolds as the ones admitting a compatible metric with $$\mathcal{L}_{X_\alpha} g =0$$

\noindent everywhere and therefore satisfying the condition of Chern-Hamilton conjecture.

However, Perrone \cite{perr} showed that under the extreme opposite assumption of the compatible metric being \textit{nowhere Reeb-invariant}, i.e. assuming $$\mathcal{L}_{X_\alpha} g \neq 0$$

\noindent everywhere, the existence of such critical compatible metric will imply the conformal Anosovity of the underlying contact structure.

\begin{theorem}\cite{perr}\label{crit}
If $g$ is a compatible metric which is the critical point of $E$ and $\mathcal{L}_{X_\alpha} g \neq 0$ everywhere, then $X_\alpha$ is conformally Anosov with respect to such metric.
\end{theorem}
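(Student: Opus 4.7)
The plan is to exploit the natural splitting of $\xi$ into the eigenspaces of the tensor $h := \tfrac{1}{2}\mathcal{L}_{X_\alpha} J$, and then to use the Euler-Lagrange equation produced by criticality to show that these eigenlines assemble into a pair of transverse positive and negative contact structures whose intersection is $\langle X_\alpha\rangle$; conformal Anosovity then follows directly from Proposition~\ref{caxi}.

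First, I would assemble the standard algebraic properties of $h$ on a contact metric 3-manifold: $h$ is $g$-symmetric, traceless, and anticommutes with $J$. Since $\mathcal{L}_{X_\alpha} g = 0$ is equivalent to $\mathcal{L}_{X_\alpha} J = 0$ (cf.\ Remark~\ref{nav}), the hypothesis that $\mathcal{L}_{X_\alpha} g \ne 0$ everywhere forces $h$ to be nonzero at every point; anticommutation with $J$ then forces $h$ to have eigenvalues $\pm\lambda(p)$ with $\lambda(p)>0$ at every $p\in M$. The eigenline bundles $\langle e_1\rangle, \langle e_2\rangle \subset \xi$ are $g$-orthogonal, $J$-related, and coincide pointwise with the splitting of $\xi$ already singled out in Remark~\ref{nav}.

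Next, I would compute the first variation of $E$ on the space of compatible metrics. Fixing the contact form $\alpha$ (and hence $X_\alpha$, $\theta'$ and $d\alpha$), any admissible variation of $g$ corresponds to a variation of the compatible complex structure $J$; the tangent vectors to this space are endomorphisms $\dot J$ of $\xi$ that are $g$-symmetric and anticommute with $J$, which is precisely the class that pairs meaningfully with $h$ in the integrand $|\mathcal{L}_{X_\alpha} g|^2 = 4|h|^2$. Differentiating under the integral sign, tracking both the variation of $h$ and the variation of $dVol(g)$, I expect to extract an Euler-Lagrange condition essentially of the form $\nabla_{X_\alpha} h = c\, Jh$ for a geometric constant $c$, expressing that the flow of $X_\alpha$ infinitesimally rotates the $h$-eigenlines at a controlled nonzero angular speed.

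With this identity in hand, I would verify the contact conditions on $\xi_+ := \langle e_1, X_\alpha\rangle$ and $\xi_- := \langle e_2, X_\alpha\rangle$ via the criterion recalled after Proposition~\ref{caxi}: positivity/negativity reduce to the sign of $g([e_i,X_\alpha], e_j)$, which is encoded by the $h$-eigenvalues and the sign produced by the Euler-Lagrange equation. Transversality and $\xi_+ \cap \xi_- = \langle X_\alpha\rangle$ are automatic from $e_1 \perp e_2$. The main obstacle will be the variational computation itself: compatible metrics are cut out of $\mathrm{Sym}^2(T^*M)$ by nonlinear constraints, so one has to parametrize admissible variations carefully and track how $\mathcal{L}_{X_\alpha} g$ and $dVol(g)$ both respond. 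Once that is done, the genuinely delicate point is extracting the correct sign in the Euler-Lagrange equation, which is what turns the merely pointwise splitting of $\xi$ given by $h$ into a dynamically meaningful conformally Anosov splitting.
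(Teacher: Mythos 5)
The paper offers no proof of this statement to compare against: Theorem~\ref{crit} is imported verbatim from Perrone \cite{perr} with only a citation. Judged on its own terms, your plan is essentially a reconstruction of Perrone's actual argument, and the strategy is sound. Two remarks. First, the variational step you flag as the main obstacle need not be redone from scratch: the first variation of $E$ over compatible metrics with $\alpha$ fixed (equivalently, over compatible complex structures on $\xi$) is the Chern--Hamilton computation \cite{chernh}, as corrected by Tanno, and the resulting Euler--Lagrange equation is $\nabla_{X_\alpha}\tau = 2\tau J$ for $\tau=\mathcal{L}_{X_\alpha}g$ (in the normalization $\theta'=2$), which is equivalent to an equation of exactly the shape $\nabla_{X_\alpha}h = c\,Jh$ that you predict; you are better off citing this than re-deriving it. Second, the remaining steps do close up as you expect: if $he=\lambda e$ then $g(e,\nabla_e X_\alpha)$ reduces to a multiple of $g(e,Je)=0$, so the $h$-eigenlines are indeed the null directions of Remark~\ref{nav}; differentiating $he_1=\lambda e_1$ along the flow and substituting the Euler--Lagrange equation forces $X_\alpha\lambda=0$ and pins down $\nabla_{X_\alpha}e_1$, after which $g([e_1,X_\alpha],e_2)$ and $g([e_2,X_\alpha],e_1)$ come out as nonzero multiples of $\lambda$, and a careful orientation count exhibits $\langle e_1,X_\alpha\rangle$ and $\langle e_2,X_\alpha\rangle$ as a transverse positive/negative contact pair, so Proposition~\ref{caxi} applies. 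As written, though, your proposal defers both substantive computations to ``expectations,'' so it is a correct plan rather than a complete proof; the hypothesis you should be explicit about using is that $\mathcal{L}_{X_\alpha}g\neq 0$ \emph{everywhere}, which is precisely what makes the eigenline fields (hence the bi-contact pair) globally defined and keeps $\lambda$ bounded away from zero on the compact manifold $M$.
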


 Our main theorem will use this fact to show that for a wide range of contact manifolds, critical metrics cannot be nowhere Reeb-invariant.
 
 \begin{remark}
Perrone \cite{perr} refers to compatible metrics with $\mathcal{L}_{X_\alpha} g \neq 0$ as {\ttfamily"}\textit{non-Sasakian} metrics{\ttfamily"}.
 \end{remark}
\section{Main Theorem}\label{4}

Anosov contact structures enjoy topological rigidity properties like tightness, thanks to the fact Anosov flows do not admit contractible periodic Reeb orbits. We show that the same properties hold for conformally Anosov contact structures, although their Reeb vector fields might a priori have contractible periodic Reeb orbits. We establish these facts using the computation of certain Conley-Zehnder indices and previous works of Hofer \cite{hofermain} and Hofer-Wysocki-Zehnder \cite{hwz} on contact dynamics. 

\begin{theorem}\label{main}
Let $(M,\xi)$ be a conformally Anosov contact manifold and $X_\alpha$ some associated Reeb vector field for $\xi$. Then:

1) $2e(\xi)=0 \in H^2 (M);$

2) All the periodic Reeb orbits of $X_\alpha$ are non-degenerate and (negatively or positively) hyperbolic;

3) For any periodic Reeb orbit $\gamma$ with $[\gamma]=0 \in H_1(M)$, we have $\mu_{CZ} (\gamma)=0$;

4) $(M,\xi)$ is universally tight, irreducible and it does not admit any exact cobordism to $(\mathbb{S}^3 , \xi_{std})$.
\end{theorem}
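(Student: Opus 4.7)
The plan is to exploit the invariant splitting $\xi = E^s \oplus E^u$ from conformal Anosovity to prove (1)--(3) directly, and then to derive (4) by contradiction via Theorem~\ref{hofermain}. For (1), since $\xi$ is a Whitney sum of real line bundles, the bundle automorphism $(v_s, v_u) \mapsto (-v_s, v_u)$ has fibrewise determinant $-1$ and therefore realizes an orientation-reversing self-isomorphism of $\xi$; this forces $e(\xi) = -e(\xi)$, i.e.\ $2e(\xi) = 0$. For (2), along any closed orbit $\gamma$ of period $T$, flow-invariance of the splitting diagonalizes the Poincar\'e return map with eigenvalues $\lambda_s, \lambda_u$ satisfying $|\lambda_s\lambda_u| = 1$ (since the flow preserves $d\alpha|_\xi$), while the conformal Anosov inequality applied to unit vectors in $E^s, E^u$ gives $|\lambda_u|/|\lambda_s| \geq Ae^{CT} > 1$; both eigenvalues are thus real with $|\lambda_u| > 1 > |\lambda_s|$, so $\gamma$ is non-degenerate and (positively or negatively) hyperbolic according to whether $E^u|_\gamma$ is orientable.

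For (3), null-homology of $\gamma$ implies $\langle w_1(E^u),[\gamma]\rangle = 0$, so $E^s|_\gamma$ and $E^u|_\gamma$ are trivial; fixing unit sections yields a symplectic ``Anosov framing'' $\nu^{\mathrm{au}}$ of $(\xi|_\gamma, d\alpha)$ in which $\Phi(t)$ is diagonal of the form $\mathrm{diag}(e^{-\mu(t)}, e^{\mu(t)})$. This path does not rotate in $Sp(1_{\mathbb{C}})$, so the axiomatic characterization of $\mu_{CZ}$ yields $\mu_{CZ}^{\nu^{\mathrm{au}}}(\gamma) = 0$. To transfer this to a Seifert framing $\nu^\Sigma$ coming from a bounding surface $\Sigma$ (which exists since $\xi|_\Sigma$ is trivial, as $H^2(\Sigma) = 0$), observe that the change of framing is a loop in $Sp(1_{\mathbb{C}})$ of some winding $k$, so Maslov compatibility gives $\mu_{CZ}^\Sigma(\gamma) = 2k$; item (1) ensures $\mu_{CZ}^\Sigma(\gamma)$ is $\Sigma$-independent, and a direct analysis of the relative Euler obstruction to extending the Anosov framing of $\gamma$ over $\Sigma$ shows $k = 0$.

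Finally, (4) follows: if $\xi$ were overtwisted, $M$ reducible, or $(M,\xi)$ admitted an exact cobordism to $(\mathbb{S}^3,\xi_{std})$, Theorem~\ref{hofermain} would provide a contractible Reeb orbit $\gamma$ with $\mu_{CZ}(\gamma)\in\{2,3\}$, contradicting (3). For universal tightness I would lift $\alpha$ and the splitting to the universal cover $\tilde M$, observe that conformal Anosovity persists there, and use that closed Reeb orbits of the lifted flow are precisely the lifts of contractible orbits in $M$ (hence have CZ index $0$), so a suitable non-compact analogue of Hofer's theorem (or descent through finite covers) forbids an overtwisted disk in $\tilde M$. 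The main obstacle I expect is the comparison in (3) between $\nu^{\mathrm{au}}$ and $\nu^\Sigma$: promoting the qualitative input $2e(\xi) = 0$ into the sharp conclusion $k = 0$ requires careful bookkeeping of the relative rotation number of $E^u$ along $\gamma$. Universal tightness is a secondary difficulty, since Hofer's theorem is formulated for closed manifolds and so the argument must either descend to finite covers (each still closed and conformally Anosov) or invoke a non-compact variant of the holomorphic-disk argument.
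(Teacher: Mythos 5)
Your overall strategy coincides with the paper's: exploit the invariant splitting $\xi=E^s\oplus E^u$ for parts (1) and (2), show that nullhomologous orbits have vanishing Conley--Zehnder index in (3), and feed that into Theorem~\ref{hofermain} for (4). Parts (1), (2) and (4) are essentially right. For (1) the paper simply cites Kobayashi's criterion that $2e(\xi)=0$ if and only if $\xi$ admits a line subbundle (applied to $E^u$); your fibrewise involution $(v_s,v_u)\mapsto(-v_s,v_u)$, giving $\xi\cong -\xi$ and hence $e(\xi)=-e(\xi)$, is an acceptable direct substitute. In (2), the only quibble is that $Ae^{CT}>1$ is not automatic when $A<1$; apply the defining inequality to the iterates $\gamma^m$ to get $\left(|\lambda_u|/|\lambda_s|\right)^m\ge Ae^{CmT}$ for all $m$, hence $|\lambda_u|/|\lambda_s|\ge e^{CT}>1$. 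In (4) the paper is no more detailed than you are about universal tightness, so I will not press that point.

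The genuine gap is exactly the step you flag as ``the main obstacle'' in (3): you never prove $k=0$, and that equality \emph{is} the content of $\mu_{CZ}(\gamma)=0$ --- everything before it (the Anosov framing exists on $\gamma$ because $\langle w_1(E^u),[\gamma]\rangle=0$, and the path is non-rotating in that framing) is the easy half. The reason a ``direct analysis of the relative Euler obstruction'' is delicate is that although $E^u|_\gamma$ is orientable, $E^u$ restricted to the Seifert surface $\Sigma_1$ need not be, so the section of $E^u|_\gamma$ defining your Anosov framing has no evident extension to a nonvanishing section of $\xi|_{\Sigma_1}$, and the relative winding $k$ cannot be read off. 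The paper closes this by a doubling trick: let $\pi:\Sigma_2\to\Sigma_1$ be the orientation double cover of $E^u|_{\Sigma_1}$, so that $\pi|_{\partial\Sigma_2}$ is the double cover $\gamma^2\to\gamma$ and $\pi^*E^u$ \emph{is} orientable over $\Sigma_2$; then the Anosov framing of $\gamma^2$ extends over $\Sigma_2$, giving $\mu_{CZ}^{\Sigma_2}(\gamma^2)=0$. Since $2e(\xi)=0$ the index is independent of the bounding $2$-chain, so $\mu_{CZ}^{2\Sigma_1}(\gamma^2)=\mu_{CZ}^{\Sigma_2}(\gamma^2)=0$, and the iteration formula for hyperbolic orbits (Remark~\ref{czreeb}) gives $\mu_{CZ}^{2\Sigma_1}(\gamma^2)=2\mu_{CZ}^{\Sigma_1}(\gamma)$, whence $k=0$. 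So your outline is salvageable, but as written the central claim of part (3) --- and with it part (4) --- rests on an unproved assertion.
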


\vspace{0.5cm}

\begin{proof}
1) By Kobayashi \cite{Kobayashi} (see Theorem 2 and 5), $2e(\xi)=0 \in H^2 (M)$ if and only if $\xi$ admits a line sub bundle. In this case, consider the line sub bundle $E^u$.

\vspace{0.5cm}

2) The dynamics of $\xi = E^s \oplus E^u$ forces the Poincare return map along $\gamma$ to have to distinct eigenspaces with real eigenvalues $\lambda_1 , \lambda_2$ (since the stable and unstable directions are preserved). Also we see that $|\lambda _i|\neq 1$, since in that case either $\lambda_1=\lambda_2=1$ or $\lambda_1=\lambda_2=-1$, which corresponds to the Poincare return map being $Id$ or $-Id$. This is in contradiction with conformal Anosov dynamics on $\xi = E^s \oplus E^u$.

\vspace{0.5cm}

3) The rough idea for computing Conley-Zehnder indices is that in this case, the Reeb flow does not {\ttfamily"}rotate{\ttfamily"} with respect to the splitting $\xi = E^s \oplus E^u$. But this splitting does not necessarily induce a trivialization of the contact structure, restricted to the periodic Reeb orbit, since the stable (or unstable) line fields are not necessarily orientable.

In order to compute the Conley-Zehnder index for $\gamma$, we need a symplectic trivialization of $\xi |_\gamma$. Since $[\gamma]=0 \in H^2(M)$, we can find a Seifert surface $\Sigma_1 \subset M$ for $\gamma$ (in particular $\partial \Sigma_1 =\gamma$). The splitting $E^s \oplus E^u$ on $\Sigma_1$ would induce a trivialization of $\xi|_\gamma$, if $E^u |_{\Sigma_1}$ and $E^s |_{\Sigma_1}$ are orientable. Note that orientability of one will imply orientability of the other one, since $\xi$ is coorientable. However this is not the case in general. Also note that we need to choose the vectors of the {\ttfamily"}right{\ttfamily"} length in the directions of these line bundles to make the trivialization symplectic.

Now let $\pi : \Sigma _2 \rightarrow \Sigma_1$ to be the orientation double cover for $E^u |_{\Sigma_1}$ and note that $\pi |_{\partial \Sigma_2}$ is a double covering map for $\gamma= \partial \Sigma_1$. This induces a trivialization on $\gamma^2$, since the lift of $E^u$ to $\Sigma_2$ is orientable.

Since the splitting is preserved by the Reeb flow, the induced path of symplectic maps only includes the ones with positive real eigenvalues and therefore, we have $\mu^{\Sigma_2}_{CZ}(\gamma^2)=0$ (see Remark~\ref{czbasic}).

But since $2e(\xi)=0$, the Conley-Zehnder index is the same for any other choice of trivialization induced from a 2-chain, in particular from $2\Sigma_1$. i.e.

$$\mu_{CZ} ^{2\Sigma _1}(\gamma^2) = \mu_{CZ} ^{\Sigma_2} (\gamma^2)=0.$$

Now noting that $\gamma$ is a hyperbolic periodic Reeb orbit, by Remark \ref{czreeb}:

$$\mu_{CZ}^{2\Sigma_1} (\gamma^2)=2\cdot \mu_{CZ}^{\Sigma_1} (\gamma)=0$$ and thus $$\mu_{CZ}(\gamma)=0.$$

\vspace{0.5cm}

4) The implications follow from the Theorem \ref{hofermain}, since in part 3) we showed that contractible periodic Reeb orbits in this case need to have Conley-Zehnder index equal to 0.

\end{proof}

\begin{remark}
 By the above argument, even if $[\gamma]\neq 0 \in H_1(M)$, the parity of $\mu_{CZ} (\gamma)$ (which is independent of the choice of trivialization) is determined by the orientability of $E^s$ (or $E^u$) on $\gamma$. That is $\mu_{CZ} (\gamma)$ is even (and actually zero with respect to such trivialization) if $E^s |_\gamma$ is orientable and is odd otherwise.

\end{remark}

Now can  we observe some consequences of our main theorem. First we notice that by Eliashberg's classification of contact structures on $\mathbb{S}^3$ \cite{elover}\cite{elmart}, all contact structures are included in the above theorem. Therefore, we will see that Corollary~\ref{corsph} is true. That is, $\mathbb{S}^3$ does not admit any conformally Anosov contact structures.

We can also easily conclude the Riemannian geometric consequences, mentioned in Corollary~\ref{negintrocor} and~\ref{critintro}, by applying Theorem~\ref{neg} and \ref{crit}.



\begin{thebibliography}{00}

\bibitem{anosov} Anosov, Dmitry Victorovich. {\em Geodesic flows on closed Riemannian manifolds of negative curvature.} Trudy Matematicheskogo Instituta Imeni VA Steklova 90 (1967): 3-210.
\bibitem{asa0} Asaoka, Masayuki. {\em Classification of regular and non-degenerate projectively Anosov flows on three-dimensional manifolds.} Journal of Mathematics of Kyoto University 46.2 (2006): 349-356.
\bibitem{asa} Asaoka, Masayuki, Emmanuel Dufraine, and Takeo Noda. {\em Homotopy classes of total foliations and bi-contact structures on three-manifolds.} arXiv preprint arXiv:0706.1879 (2007).
\bibitem{bl} Blair, David E. {\em Riemannian geometry of contact and symplectic manifolds.} Springer Science \& Business Media, 2010.
\bibitem{blperr} Blair, David E., and D. Peronne. {\em Conformally Anosov flows in contact metric geometry.} Balkan Journal of Geometry and Its Applications 3.2 (1998): 33-46.
\bibitem{chernh} Chern, Shiing-Shen, and Richard S. Hamilton. {\em On Riemannian metrics adapted to three-dimensional contact manifolds.} Arbeitstagung Bonn 1984. Springer, Berlin, Heidelberg, 1985. 279-308.
\bibitem{wendl} Cioba, A., and C. Wendl. {\em Unknotted Reeb orbits and nicely embedded holomorphic curves (2016).} Preprint arXiv 1609.
\bibitem{elover} Eliashberg, Yakov. {\em Classification of overtwisted contact structures on 3-manifolds.} Inventiones mathematicae 98.3 (1989): 623-637.
\bibitem{elmart} Eliashberg, Yakov. {\em Contact 3-manifolds twenty years since J. Martinet's work.} Annales de l'institut Fourier. Vol. 42. No. 1-2. 1992.
\bibitem{sft} Eliashberg, Yakov, A. Givental, and Helmut Hofer. {\em Introduction to symplectic field theory.} Visions in mathematics. Birkhäuser Basel, 2000. 560-673.
\bibitem{confoliations} Eliashberg, Yakov, and William P. Thurston. {\em Confoliations.} Vol. 13. American Mathematical Soc., 1998.
\bibitem{etnyreintro} Etnyre, John B. {\em Introductory lectures on contact geometry.} arXiv preprint math/0111118 (2001).
\bibitem{eh} Etnyre, John B., and Ko Honda. {\em On symplectic cobordisms.} Mathematische Annalen 323.1 (2002): 31-39.
\bibitem{etkmass1} Etnyre, John B., Rafal Komendarczyk, and Patrick Massot. {\em Tightness in contact metric 3-manifolds.} Inventiones mathematicae 188.3 (2012): 621-657.
\bibitem{etkmass2} Etnyre, John, Rafal Komendarczyk, and Patrick Massot. {\em Quantitative Darboux theorems in contact geometry.} Transactions of the American Mathematical Society 368.11 (2016): 7845-7881.
\bibitem{foulonh} Foulon, Patrick, and Boris Hasselblatt. {\em Contact Anosov flows on hyperbolic 3-manifolds.} Geometry \& Topology 17.2 (2013): 1225-1252.
\bibitem{geiges} Geiges, Hansjörg. {\em An introduction to contact topology.} Vol. 109. Cambridge University Press, 2008.
\bibitem{gutt} Gutt, Jean. {\em The Conley-Zehnder index for a path of symplectic matrices.} arXiv preprint arXiv:1201.3728 (2012).
\bibitem{hofermain} Hofer, Helmut. {\em Pseudoholomorphic curves in symplectizations with applications to the Weinstein conjecture in dimension three.} Inventiones mathematicae 114.1 (1993): 515-563.
\bibitem{hwz2} Hofer, H., et al. {\em The Dynamics on Three-Dimensional Strictly Convex Energy Surfaces.} Annals of Mathematics, vol. 148, no. 1, 1998, pp. 197–289. JSTOR, www.jstor.org/stable/120994.
\bibitem{hwz} Hofer, Helmut, Kris Wysocki, and Eduard Zehnder. {\em Unknotted periodic orbits for Reeb flows on the three-sphere.} Topological Methods in Nonlinear Analysis 7.2 (1996): 219-244.
\bibitem{hoferdy} Hofer, Helmut, Markus Kriener. {\em Holomorphic curves in contact dynamics.} Proceedings of Symposia in Pure Mathematics. Vol. 65. AMERICAN MATHEMATICAL SOCIETY, 1999.
\bibitem{hoz} Hozoori, Surena. {\em Ricci curvature, Reeb vector fields and contact 3-manifolds.} In preparation, available upon request.
\bibitem{Kobayashi} Kobayashi, Shoshichi. {\em Principal fibre bundles with the 1-dimensional toroidal group.} Tohoku Mathematical Journal, Second Series 8.1 (1956): 29-45.
\bibitem{Long} Long, Yiming. {\em Index theory for symplectic paths with applications.} Vol. 207. Birkhäuser, 2012.
\bibitem{Mitsumatsu} Mitsumatsu, Yoshihiko. {\em Anosov flows and non-Stein symplectic manifolds.} Annales de l'institut Fourier. Vol. 45. No. 5. 1995.
\bibitem{noda} Noda, Takeo. {\em Regular projectively Anosov flows with compact leaves.} Annales de l'institut Fourier. Vol. 54. No. 2. 2004.
\bibitem{perr} Perrone, Domenico. {\em Torsion and conformally Anosov flows in contact Riemannian geometry.} Journal of Geometry 83.1-2 (2005): 164-174.
\bibitem{plante} Plante, Joseph F. {\em Anosov flows.} American Journal of Mathematics 94.3 (1972): 729-754.
\bibitem{rei} Reinhart, Bruce L. {\em The second fundamental form of a plane field.} Journal of differential geometry 12.4 (1977): 619-627.
\bibitem{ruk} Rukimbira, Philippe. {\em Chern-Hamilton’s conjecture and K-contactness.} Houston J. Math 21.4 (1995): 709-718.

\end{thebibliography}
\end{document}